\definecolor{comcolor}{rgb}{0.9,0.3,0.3}
\definecolor{starcolor}{rgb}{0.3,0.3,0.9}
\definecolor{hscolor}{rgb}{0.9,0.6,0.5}
\definecolor{darkgreen}{rgb}{0.1,0.6,0.3}
\newtheorem{thm}{Theorem}[section]
\newtheorem{lemma}[thm]{Lemma}
\newtheorem{corollary}[thm]{Corollary}
\newtheorem{prop}[thm]{Proposition}
\theoremstyle{definition}
\newtheorem{defn}[thm]{Definition}
\newtheorem{example}[thm]{Example}
\newtheorem{rem}[thm]{Remark}
\newcommand{\be}[1]{\begin{equation}\label{#1}}
\newcommand{\ee}{\end{equation}}
\newcommand{\ba}{\begin{array}}
\newcommand{\ea}{\end{array}}
\newcommand{\bal}{\begin{aligned}}
\newcommand{\eal}{\end{aligned}}
\newcommand{\R}{\mathbb{R}}
\newcommand{\N}{\mathbb{N}}
\newcommand{\Z}{\mathbb{Z}}
\newcommand{\E}{\mathbb{E}}
\renewcommand{\P}{\mathbb{P}}
\newcommand{\mbb}[1]{\mathbb{#1}}
\newcommand{\mb}[1]{\mathbf{#1}}
\newcommand{\wt}[1]{\widetilde{#1}}
\newcommand{\Ex}[1]{\ensuremath{\mathbb{E} \left[#1 \right]}}
\newcommand{\Prob}[1]{\ensuremath{\mathbb{P} \left(#1 \right)}}
\title{\vspace{-15mm}\bf  
Duality and fixation in $\Xi$-Wright-Fisher processes with frequency-dependent selection
}
\author{
\textsc{Adri\'an Gonz\'alez Casanova
\footnote{Supported by the German Research Foundation through the Priority Programme 1590 \textit{Probabilistic Structures in Evolution.}}
}\\
\emph{Weierstrass Institute Berlin}\\[1mm]
\textsc{Dario Span\`o}\\
\emph{University of Warwick}
}
\date{\today}
\begin{document}

\maketitle
\thispagestyle{empty}

\begin{abstract}
A two-types, discrete-time population model with finite, constant size is constructed, allowing for a general form of frequency-dependent selection and skewed offspring distribution. Selection is defined based on the idea that individuals first choose a (random) number of \emph{potential} parents from the previous generation and then, from the selected pool, they inherit the type of the fittest parent. The probability distribution function of the number of potential parents per individual thus parametrises entirely the selection mechanism. Using duality, weak convergence is then proved both for the allele frequency process of the selectively weak type and for the population's ancestral process. The scaling limits are, respectively, a two-types {$\Xi$-Fleming-Viot} jump-diffusion process with frequency-dependent selection, and a branching-coalescing process with general branching and simultaneous multiple collisions. Duality also leads to a characterisation of the probability of extinction of the selectively weak allele, in terms of the ancestral process' ergodic properties. 
\medskip

\noindent \textit{Key words and phrases}: Cannings models, frequency-dependent selection, moment duality, ancestral processes, branching-coalescing stochastic processes, fixation probability, $\Xi$-Fleming-Viot processes.\medskip

\noindent\textit{AMS 2010 subject classifications}: 60G99, 60K35, 92D10, 92D11, 92D25.
\end{abstract}

\section{Introduction}
Modelling selection is acknowledged to be one of the most delicate problems in mathematical population genetics. A variety of hypotheses have been proposed to describe how competing allelic types jostle against each other in trying to propagate successfully their type in the next generation (\cite{Ki68, KO72, GIL84, WE04}). Despite the complexity of the debate on the concept of selection itself, there is general agreement on the idea that an appropriate measure of strength of fitness of a given allelic type is the probability of its eventual fixation in the population, conditional on a given starting frequency. \\
Fixation is the event that the frequency of the allelic type will eventually reach 1 and stay there forever.
For the exact calculation of the probability of fixation, the notion of duality has recently proved to be a formidable tool by means of which to combine efficiently information coming from both a forward-in-time (allele frequencies diffusion) and a backward-in-time (ancestral process) analysis of the population considered. 
In particular, it has been established (e.g. \cite{G14, F13, EGT10}) that, in case of weak selection, the fate of a selectively disadvantaged allele is intrinsically connected, via duality, to the long-term dynamics of the so-called \emph{block-counting process} describing, at each time in the past, how many non-mutant lineages are alive in the ancestral graph representing the population's genealogy.
%Compared to its neutral counterpart, 
Unless the population is neutral, such graphs (the so-called \emph{Ancestral Selection Graph}, ASG \cite{NK97, KN97, N99})
% is clearly a less tractable combinatorial object: for one thing, ASGs 
are not trees, but coalescing/branching processes where single lineages, when traced backwards in time, may split into one \emph{true} and one (or two, in some forms of balancing selection, see \cite{N99,WS09}) \emph{virtual} parental lines, at rates which encode the difference in fitness between the two allelic types. 

%Although ASG have not been derived for models with general frequency-dependent selection,
%In contrast, the block-counting process of an ASG is a relatively user-friendly Markov chain on the set of the non-negative integers. Thus it is encouraging news that, at least for some models, the probability of fixation turns out to depend only on such a simple random function. 
%the modern advances on the method of duality have thus been supported, implicitly or explicitly, by
{The ASG induces} a notion of selection associated to the idea of lineages splitting into multiple potential parents. This observation is a crucial starting point for the purposes of this paper.\\

In this paper we will construct a class of population models incorporating two key features: on one hand, they allow for a general form of frequency-dependent selection, leading to a genealogy with multiple branching of lineages; on the other hand, they allow for the possibility of high-fecundity extreme reproductive events ($\Xi$-events), leading to a genealogy with simultaneous multiple coalescence of lineages. 
This class can be interpreted as a class of Cannings models with selection, as defined in \cite{LL07}. For models in this class, to the best of our knowledge, a unified treatment of the scaling-limit allele frequency dynamics, the corresponding ancestry, as well as the probability of fixation, has not been carried out in full generality yet. 
\\
We will begin with a construction of a two-types, discrete-time population model with constant size $N$ and non-overlapping generations. Our model of selection, whose construction is laid out in Section 2,  is based on the idea that individuals first choose a (random) number of \emph{potential parents} from the previous generation and then, from the selected pool, they inherit the type of the fittest parent. The probability distribution function of the number of potential parents per individual thus parametrises  entirely the selection mechanism. In particular, the model is non-neutral whenever the individuals are allowed to choose more than one potential parent with positive probability. Thus rather than encoding limit properties of selection, in this paper multiple parents become part of the definition of selection itself. \\
As for extreme reproductive events, these are modelled by assuming that, in some generations chosen at random, distinct individuals make correlated choices of their potential parents, the correlation being driven by a background measure $\Xi$ on the infinite simplex. This is in line with many modern constructions of coalescent processes with simultaneous multiple collisions and, in fact, our construction may be viewed as a discrete-time analogue of the Poisson-construction of $\Xi$-Moran models presented in \cite{BBetal09}, plus selection. On the other hand, by keeping track of each individual's potential ancestors backward in time, our model yields as a finite-population, discrete-time analogue of an ASG, with non-overlapping generations, converging to a general ASG scaling limit with multiple branching  (as opposed to binary-only or ternary-only branching) as well as simultaneous multiple coalescences of lineages.\\

%Let us call $Q_N[K_v=i]$ the probability that a certain individual $v$ has $i$ potential parents. We work under the assumption that, as $N\to\infty$, $Q_N[K_v=1]=1-\rho_N\rightarrow 1$, where $\rho_N$ is such that $\lim_{N\rightarrow \infty}(\rho_N N)^{-1}<\infty$ and $\E_{Q_n}[K_v|K_v>1]$ converges to a finite constant. measuring the time in units $\rho_N^{-1}$, 

We can show that in the limit as $N\to\infty$, with time appropriately rescaled,  the process of the allele frequency of the selectively weaker type converges in distribution to a two-type $\Xi$-Fleming-Viot jump-diffusion process in $[0,1]$ with frequency-dependent selection, solution to the stochastic differential equation (SDE) \begin{eqnarray}\label{sdeintro}
d X_t&=&- \kappa s(X_t)X_t(1-X_t)dt+\sqrt{ \sigma X_t(1-X_t)}dB_t\notag\\&&+\int_{(0,1]}\int_{(0,1]}\sum_{i=1}^\infty y_i\Big(\mathbb{I}_{\{u_i\le X_{t-} \}}-X_{t-}\Big) \widetilde{N}(d t,d y,d \overline u),
\end{eqnarray}
where $\kappa$ and $\sigma$ are non-negative constants and $s(x)$ is a power series function with positive, non-increasing coefficients: $\kappa$ measures the strength of the selective pressure, $s$ describes its shape as a function of the allele frequencies and $\sigma$ represents the effective population size (in population genetics terminology, the strength of the random genetic drift). 
By \lq\lq time appropriately rescaled\rq\rq we mean that time is measured in units of $\rho_N^{-1},$ where $\rho_N$ (assumed to be $o(1/N)$) is the probability, for any individual, of choosing more than one potential parent. Further assumptions on the mean number of potential parents will also be needed (see condition \emph{(iv)} of Proposition \ref{lemma1}). %For the convergence result to hold, we will also require that 
%We work under the assumption that, as $N\to\infty$, $\rho_N\to 0$ and the mean number of potential parents per individual converges to a finite constant.

The first addend on the right-hand side of \eqref{sdeintro} is the one accounting for selection. The frequency-dependent selection function $s$ will turn out (see later, Proposition \ref{lemma1}) to be entirely determined by the limit distribution of the number of potential parents per individual. Indeed, denote by $\varphi(x)$ the probability generation function (pgf) of the distribution of such a number, conditional on it being larger than one. It will appear (see Remark \ref{r:gen}, equation \eqref{bpdrift}) that $$s(x)=\frac{1-\varphi(x)}{1-x}.$$ \\
The second term in \eqref{sdeintro} is the classical Wright--Fisher diffusion and the third term accounts for the jumps induced by extreme reproductive events, driven by a Poisson process $\wt N$ whose intensity measure depends on $\Xi$ and regulates the frequencies' jumps and sizes when the population is infinite. Note that our convergence result 
applies to any choice of $\Xi$-measure. \\
To the best of our knowledge, the existence itself of a solution to the augmented SDE \eqref{sdeintro}, encompassing both frequency-dependent selection and $\Xi$-extreme reproduction dynamics, has not been proved before. A proof will be given in Lemma \ref{l:exist} based on a recent work of {Gonz\'alez Casanova et al}  \cite{GPP16}. We also believe that, although frequency-dependent selection in Cannings models with non-overlapping generations have been considered in the literature (\cite{LL07}), scaling limit approximations have so far been derived only under the assumption that, under neutrality, the population is in domain of attraction of Kingman's coalescent i.e. its limit behaviour is purely diffusive. Our convergence result of Proposition \ref{lemma1} overcomes such a limitation.\\
Several special cases of \eqref{sdeintro} are nevertheless well known:
\begin{itemize}
\item For $\kappa s(x)=0$, the SDE reduces to the two-type version of the so-called neutral $\Xi$-Fleming-Viot SDE (see \cite{BBetal09}, Section 5.3), whose genealogy is described by a coalescent tree process with simultanous multiple collisions, or $\Xi$-coalescent (\cite{P99, S00, MS01}).\\
 \item Without the jump component $(\wt N\equiv 0)$, one recognises in \eqref{sdeintro} the SDE of a general frequency-dependent selection diffusion process in $[0,1]$ which captures many of the most popular models of selection in genetics, including the following notable examples:
\begin{itemize}
\item $s(x)= \bar s$ for a constant $\bar s$. This is the classical model of \emph{weak selection} (\cite{Ki68, KO72}). 
\item $s(x)=\lambda-\nu x$ where $\lambda>-\nu>0$.  This is a case of \emph{balancing selection.} 
%The probability of fixation has been derived in connection with probability of coalescence in the dual ancestry by Lessard and Ladret [LL07]. An ASG for such a particular case has been studied in [neuhauser]. 
The parameters $\lambda$ and $\nu$ have been described in terms of stochastic evolutionary games (see e.g. \cite{LL07, PV12} and references therein).  In our framework, they will be essentially tail probabilities of the distribution of the potential parents' number. The scaling limit genealogy is encoded by an ASG with ternary branches (\cite{N99}). 
\end{itemize}
More examples of Cannings models with frequency-dependent selection leading to purely diffusive scaling limits can be found in \cite{LL07}, where the selection mechanism is defined via evolutionary game theory.
\item  With non-zero jump component $\wt N$, the particular case of \eqref{sdeintro} where $s(x)\equiv 1$ and the parameter measure $\Xi$ is concentrated on $[0,1]$ ($\Lambda$-coalescent) has been studied independently by Griffiths \cite{G14} and Foucart \cite{F13}, although a derivation of the SDE from a pre-limiting model, or a discussion on the existence of its solution, was not within their aims. 
\end{itemize}
The cited work of Griffiths and Foucart (but see also \cite{EGT10}) forms the main background for the methodology and results on duality and fixation proposed in this paper. Both authors apply a duality method to describe the probability of fixation of the advantageous allele. In particular, Griffiths' methodology relies on the following clever interpretation of the generator of the two-type Lambda Fleming-Viot under neutrality:
\begin{equation}
 Af(x)=\frac{1}{2}\Ex{x(1-x)f{''}(x(1-W)+VW)},\label{bobgenintro}
 \end{equation}
where $V$ is a Uniform random variable in $[0,1]$, $W=SY$ with $Y$ a $\Lambda/\Lambda([0,1])$-distributed random variable and $S$ a size-biased uniform random variable in $[0,1]$, and all the variables on the right-hand-side are independent. See \cite{G14}, equation (7).
Our own derivation is crucially based on an extension of Griffiths' approach to more general $\Xi$-coalescent dynamics. 
%Under neutrality, our extension reads
%\begin{eqnarray}
%&&Af(x)=\notag\\
%&& \frac{\Xi(\nabla_\infty)}{2}\Ex{\frac{\left(-x+\sum_{i=1}^\infty Z^*_iB_i\right)\left(\sum_{i=1}^\infty Z^*_iB_i\right)}{(\sum_{i=1}^\infty {Z^*_i}^2)}f''\left(x(1-W)+VW\sum_{i=1}^\infty Z^*_iB_i\right)}\notag\\
%\label{bobgenintro}\end{eqnarray}
%  where $Z^*_i=Z_i/|Z|,$ $W=S|Z|$, $|Z|:=\sum_iZ_i$ for $Z=(Z_1,Z_2,\ldots)$ a random sequence with distribution $\Xi/\Xi(\nabla_\infty)$, where $\nabla_\infty:=\{x_1\geq x_2\geq\cdots 0:\sum_i x_i\leq 1\}$ is the ranked infinite simplex, $S$ is a size-biased uniform random variable, $(B_i:i\in\N)$ are \emph{i.i.d.} Bernoulli random variables with parameter $x$ and $Z,S,V, (B_i)$ are independent. 
  See Lemma \ref{l:xibob} of this paper.\\
  We will show (in Proposition \ref{pr:limdual}) that moment duality holds between the process $(X_t)$ solution to \eqref{sdeintro}, 
  %with generator \eqref{bobgenintro}, 
  and the block-counting process $(D_t)$ associated to a branching-coalescing random graph (the limit ASG), with generator given by 
\begin{eqnarray}
Lf(n)=\kappa \sum_{i=0}^\infty \pi_i[f(n+i-1)-f(n)] + \sigma{n\choose 2}[f(n-1)-f(n)]+\overline{L}f(n)
\label{L_n_intro}
\end{eqnarray}
for every $n\in\N$ and $f:\N\rightarrow \R$ in $C_2,$ where {$\pi_i$ is a sequence of constants depending on the distribution of the number of potential parents and}  $\bar{L}$ denotes the generator of the $\Xi$-coalescent (see equation \eqref{lbar} for details). Without the $\bar L$ component, $L$ is the generator of a branching process with logistic growth (\cite{AL05}).\\
   Moment duality means namely that, for every $x\in[0,1],n\in\N,$
 \begin{equation}
 \Ex{X_t^n\mid X_0=x}=\Ex{x^{D_t}\mid D_0=n}.\label{momdualintro}
 \end{equation}
\\
  We will exploit moment duality to study the probability of fixation for the process $(X_t)$ of the selectively weaker allele frequency: we will prove that, for any given initial frequency $x$, the process almost surely gets extinct (i.e. $X_t=0$ eventually) if and only if $(D_t)$ does not have a stationary distribution. The weaker allele may survive and in fact even reach fixation ($X_t=1$) if $(D_t)$ has a stationary distribution. The probability of fixation of the fitter allele will depend on the stationary distribution of $(D_t)$. This is the content of Lemma \ref{lemma:selection21}.   Necessary and sufficient conditions for either scenario to hold will be found in Theorem \ref{THM} to depend on a critical value $\kappa^*$ for the parameter $\kappa$ measuring the \emph{total selection pressure} in \eqref{sdeintro}. The critical value will depend on the mean number $\beta$, say, of potential parents per individuals in a branching event as well as on the coalescent parameter measure $\Xi$: indeed, the process $(D_t)$ will reach stationarity if and only if $\kappa\geq\kappa^*$ where 
\begin{equation}
\kappa^*:=\frac{1}{2\beta}\Ex{\frac{1}{\sum_{i=1}^\infty {Z^*_i}^2}\frac{1}{W(1-W)}},
\label{criticalk}
\end{equation}
so long as $\beta$ and $\kappa^*$ are finite and $\Xi$ satisfies some mild admissibility conditions (see Definition \ref{def:admiss}). In \eqref{criticalk}, $Z^*_i=Z_i/|Z|,$ $W=S|Z|$, $|Z|:=\sum_iZ_i$ for $Z=(Z_1,Z_2,\ldots)$ a random sequence with distribution $\Xi/\Xi(\nabla_\infty)$, where $\nabla_\infty:=\{x_1\geq x_2\geq\cdots 0:\sum_i x_i\leq 1\}$ is the ranked infinite simplex, $S$ is a size-biased uniform random variable, $(B_i:i\in\N)$ are \emph{i.i.d.} Bernoulli random variables with parameter $x$ and $Z,S,V, (B_i)$ are independent. \\
The problem of describing the long-term behaviour of branching-coalescing processes including multiple collisions has been addressed in \cite{GPP16} but only under the assumption that $\sigma>0$, i.e. that Kingman-like coalescence events occur with positive rates. The method in \cite{GPP16}  crucially relies on stochastic domination and does not allow to analyse if and how, in absence of a Kingman component, the competing events of branching and coalescence balance each other out or, on the contrary, one of them eventually prevails. Our Theorem \ref{THM} aims to fill this missing gap, thus in Section \ref{sec:fixation} we will assume $\sigma=0$ which, interestingly, turns out to be the only case with finite $\beta$ where a threshold $\kappa^*$ can be found.

\subsection{Outline of the paper}
The paper is structured as follows: in Section \ref{sec:diWF} our two-type discrete model will be introduced in terms of a random graph with vertex set in a portion of $\Z^2$ (generation $\times$ label of individual), with random edges denoting potential ancestral relation between individuals in successive generations. This approach will allow us to define both the frequency process and the ancestral process on the same probability space. 
%\tcr{This will lead to a backwards-in-time representation of the population's history for any finite $N$ which can be interpreted as a discrete analogue of the ASG}. 
In the same Section, a form of duality relationship, the so-called \emph{sampling duality} (see \cite{M99}), between the allele frequency process and the block-counting process of the population's ancestral graph, will be proved.  We will also derive the one-step transition probabilities of the frequency process of the weak allelic type. In both representations, a central role will be played by the pgf of the distribution of the number of potential parents per individual, per generation.\\
In Section \ref{sec:SDE} we will prove convergence, with time appropriately rescaled, of the weak allele's frequency process to the process $(X_t)$ solving the SDE \eqref{sdeintro}. In particular, we will show existence of a strong solution to \eqref{sdeintro}. We will thus establish moment duality \eqref{momdualintro} with a branching-coalescing process, i.e. the Markov chain with generator \eqref{L_n_intro}.\\
In Section \ref{sec:fixation} we will first prove our $\Xi$-version of Griffiths' representation \eqref{bobgenintro} for the generator associated to the frequency process' SDE \eqref{sdeintro} and then we will use moment duality to determine our criterion for fixation based on the critical value $\kappa^*$ for the selection pressure parameter $\kappa$.

\section{Discrete models with selection}\label{sec:diWF}
The goal of this section is to define a two-types, discrete-time population model, with finite constant size $N$ and non-overlapping generations, with frequency-dependent selection. We start with a first formulation of the model without $\Xi$-reproductive events.
\subsection{Selection without extreme reproductive events}\label{sec:noext}
We denote the allelic type space with $\{0,1\}$ and adopt the notation $\N=\{1,2,\ldots\}$. Type 1 will denote the selectively advantageous allele. We parametrise the strength of selection via a probability distribution $Q_N$ on $\N\cup\{\infty\}$. The reproduction mechanism works as follows.\\
 \
\\
\emph{(1). Choice of potential parents.}
At each generation $g\in\Z$, every individual $i$ $(i=1,\ldots,N)$ chooses, independently, a random number $K_{(g,i)}$ of \emph{potential parents} among the $N$ individuals in the previous generation, $g-1$, where $K_{(g,i)}$ is a random variable with distribution $Q_N$. The choice is with replacement in the sense that, given $K_{(g,i)}=k$, the individual $i$ chooses its $k$ parents by sampling $k$ labels independently and uniformly at random from $\{1,\ldots,N\}$. However, the ancestry of the model will retain information only about all the \emph{distinct} potential parents chosen by each individual, i.e. parents chosen more than once will be included in the ancestry only once.  \\ 
\\
\emph{(2). Choice of type.} 
%Let $D_{(g,i)}=d$ be the number of distinct potential parents selected by the individual $i$ in generation $g$. 
Types 1 or 0 are arbitrarily assigned to all the individuals at a starting generation $g=0$, say. For every subsequent generation, each individual takes on the allelic type 0 if and only if all its potential parents carry the type 0. 
If at least one of the parents is of type 1, then the inherited allele will be 1.
\ \\
\ \\
\emph{(3). Actual vs virtual parents.} Although the distinction between virtual and actual parents will not play a significant role in the derivation of our main result, in order to stress the connection with the ASG of \cite{KN97} and \cite{N99}, we can stipulate that the \emph{actual parent} of $i$ is the individual with the lowest label among all the potential parents carrying the same type as $i$. All other potential parents will be considered as \emph{virtual parents}.
%We will call such models \emph{weak selection models with parameters $(\rho_N,s)$.
 \subsubsection{Wright-Fisher random graph}
 We shall now provide a random graph representation of the model, with the benefit of embedding in the same probability space both the forward in time allele frequency process and the ancestry of the population. Consider the set of vertices
$$V_N:=\Z\times\{1,\ldots,N\}%\big\{v\in \mathbb{Z}\times \{1,2,...,N\}\big\}
$$ For every $v=(g,i)\in V_N$ we denote with $g(v)=g$ the generation of the individual $v$ and with $i(v)=i$ its label. From every $v\in V_N$, an edge is drawn from $(g(v)-1,l)$ to $v$ if $l$ is one of the potential parents of $v$. The random set $E_N$ of all such edges thus depends on the following random variables:
\begin{itemize}
\item[(i)] The collection $K=(K_{v}:v\in V_N)$ of \emph{i.i.d.} $Q_N$ random variables, indicating how many potential parents are chosen by each individual at each generation. 
\item[(ii)] The collection $$L^0(K)=\{L^0_{v}\}_{v\in V_N}=\{L^{0}_{(v,1)},L^{0}_{(v,2)},\ldots, L^{0}_{(v,K_v)}\}_{v\in V_N}$$ of \emph{i.i.d.} random variables uniformly distributed over $\{1,\ldots,N\}$, where, for every $v=(g,i)$, $L^0_v$ lists the labels of all the potential parents selected by the individual $i$ at generation $g$.
\end{itemize}
For every $v$ let $J_v\leq K_v$ be the number of distinct parents appearing in $L^0_{v}=(L^{0}_{(v,1)},L^{0}_{(v,2)},\ldots, L^{0}_{(v,K_v)})$, and with $\wt L^0_v=( \wt L^{0}_{(v,1)},\dots,\wt L^{0}_{(v,J_v)})$ denote their labels. 

\begin{defn}\label{WF-fss} For every $N\in\N$ and $Q_N\in{\cal P}(\N\cup\{\infty\})$, the \emph{Wright-Fisher graph with (frequency-dependent) selection $Q_N$} is the graph with vertex set $V_N$ and the edge set \begin{equation}E^0_N:=\big \{\{(g(v)-1,\wt L^{0}_{(v,j)}),v\}: j=1,\ldots,J_v, v\in V_N\big\}.\label{E_N}
\end{equation}
\end{defn}
\subsubsection{Allele frequencies}
 Let us assign to each vertex of a fixed starting generation $g=0$ either type $0$ or $1$ arbitrarily.  Each vertex $v$ in each of the subsequent generations will be of type $1$ if and only if it is connected in $(V_N,E_N)$ to at least one vertex of type $1$ in the restriction $(V_N,E_N)\cap\left\{v\in V_N: g(v)\geq 0\right\}$. We are interested in the evolution of the type-$0$ frequencies. Let $\xi(v)$ denote the type of vertex $v$ and with $$X^N_g=1-\frac{1}{N}\sum_{\{v:g(v)=g\}}\xi(v)$$ the frequency of type-$0$ individuals at generation $g=0,1,\ldots$. \\%Then $(M_g^N)_{g\in \N}$ is a Markov chain with values in $\{0,1,2,...,N\}$.  %$F^N_i=\frac{D^N_i}{N}$ is the frequency of type $+$ individuals 
% $(X^N_g)_{g\in\N}$. \\
 Denote with $\varphi_{Q_N}$ the probability generating function of $Q_N$. Consider the sampling function $$\mu_N(x)=\Prob{\xi(v)=0\mid\, X^N_{g(v)-1}=x}$$
that is, the probability that a vertex $v$ is of type $0$ if the frequency of type $0$ individuals in generation $g(v)-1$ is $x$. This event occurs if and only if all the potential parents of $v$ are of type $0$. Since $(K_v)$ is an \emph{i.i.d.} sequence, this probability does not depend on $v$ and since the values of the coordinates in $L^0_v=(L^0_{(v,1)},\ldots,L^0_{(v,K_v)})$ do not depend on $K_v$, 
\begin{align}\label{samplingm}
\mu_{N}(x)=&\sum_{k=1}^\infty \Prob{\xi(g(v)-1,L^{0}_{(v,j)})= 0\text{ for all }j\leq K_v, K_{v}= k\mid\, X^N_{g(v)-1}=x}\nonumber
\\=&\sum_{k=1}^\infty x^k\ Q_N( K_{v}= k)=\varphi_{Q_N}(x).
\end{align}
The following proposition is an obvious consequence of \eqref{samplingm}, and of the fact that individuals choose their potential parents independently.
 \begin{prop}\label{pr:tf1}
 In a Wright-Fisher graph with selection parameter $Q_N$, the type-0 allele frequency process $(X^N_g:g\in\N)$ evolves as a time-homogeneous Markov chain on the state space $[N]/N:=\{0,1/N,\ldots,(N-1)/N, 1\}$ with transition probabilities
 \begin{equation*}
 \Prob{X^N_g=m/N\mid\ X^N_{g-1}=x}={N\choose m}\varphi_{Q_N}(x)^m(1-\varphi_{Q_N}(x))^{N-m},\ \ m=0,1,\ldots,N,
 \end{equation*}
 for every $x\in [N]/N.$
 \end{prop}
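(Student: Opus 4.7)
The plan is to combine equation \eqref{samplingm} with the conditional independence of the potential-parent choices made by distinct individuals in the same generation. Fix a generation $g$ and condition on $X^N_{g-1}=x$. The event $\{\xi(v)=0\}$ for $v=(g,i)$ is measurable with respect to $(K_v, L^0_v)$ together with the restriction of $\xi$ to generation $g-1$; in particular, it depends on the past only through the type assignments at generation $g-1$. Therefore the sequence $(X^N_g)$ satisfies the Markov property, and time-homogeneity is automatic from the fact that the law of $(K_v, L^0_v)$ does not depend on $g(v)$.

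Next, I would argue that the events $\{\xi(v)=0\}$, for $v$ ranging over vertices with $g(v)=g$, are conditionally independent given the types in generation $g-1$. This is because these events are functions of the disjoint families $\{(K_v,L^0_v):g(v)=g,i(v)=i\}_{i=1}^N$, and by item (i)--(ii) of the construction these families are mutually independent across $i$. Conditionally on $X^N_{g-1}=x$, equation \eqref{samplingm} gives
\begin{equation*}
\P(\xi(v)=0\mid X^N_{g-1}=x)=\varphi_{Q_N}(x),
\end{equation*}
so $NX^N_g=\sum_{i=1}^N \mathbf{1}_{\{\xi(g,i)=0\}}$ is, conditionally on $X^N_{g-1}=x$, a sum of $N$ i.i.d.\ Bernoulli$(\varphi_{Q_N}(x))$ random variables. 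Hence $NX^N_g$ is Binomial$(N,\varphi_{Q_N}(x))$, which yields the stated one-step transition probabilities.

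The argument is essentially bookkeeping, and I do not anticipate any substantive obstacle. The only mild subtlety is to ensure that the sampling probability \eqref{samplingm} really is the conditional probability for every vertex in generation $g$ simultaneously, which requires that the potential-parent label lists $L^0_v$ for different $v$ in the same generation do not share any shared randomness with the parental types beyond what is already captured by $X^N_{g-1}$. This is immediate from the i.i.d.\ construction of $(K_v,L^0_v)$ across $v\in V_N$, so conditional independence across individuals in the same generation follows without extra work.
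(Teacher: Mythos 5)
Your argument is correct and is exactly the one the paper intends: the paper dispenses with a formal proof by noting that the proposition "is an obvious consequence of \eqref{samplingm}, and of the fact that individuals choose their potential parents independently," which is precisely the conditional-independence-plus-Bernoulli$(\varphi_{Q_N}(x))$ bookkeeping you spell out. No substantive difference.
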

 
\begin{example}[Weak selection]
With the choice 
\begin{equation}Q_N(K_{v}>m)= s_N^{m-1},\ \ \ m=1,2,\ldots\label{Q_N}
 \end{equation}
 for some $s_N>0$ (geometric distribution), the sampling probability $\mu_N$ becomes
 \begin{equation}\mu_{N}(x)=\frac{x(1-s_N)}{1-x+x(1-s_N)}.\label{classicsel}\end{equation}
 In other words, the allele frequency process $(X_g^N:g\in\N)$ of the Wright-Fisher graph $(V_N, E_N, Q_N)$ with geometric $Q_N$ coincides with the classical Wright-Fisher model with weak selection coefficient $s_N$ (\cite{Ki68, KO72, KT75}).
 \end{example}
 
\subsection{Selection with extreme reproductive events}
Now we will extend the model of Section \ref{sec:noext} in such a way to include the possibility of high fecundity events ($\Xi$-events), where the offspring of one or more individuals may replace a non-negligible (relative to the population size) proportion of the population in the next generation. The reproduction mechanism is somehow a discrete-time analogue to the Poisson construction of the $\Xi$-Moran model proposed in \cite{BBetal09}. To this purpose, we introduce a random background formed by a sequence of \emph{i.i.d.} Bernoulli trials $H=\{H_g:g\in\Z\}\in\{0,1\}^\infty$, with probability of success $\gamma_N\in[0,1]$ and a sequence $Z=\{Z_g:g\in \Z\}$ of \emph{i.i.d.} $\nabla_\infty$-valued random elements with common distribution $\Xi$. $H$ and $Z$ are assumed to be independent. For every $g\in\Z$, $H_g=1$ (respectively, $H_g=0$) indicates that at generation $g$ extreme reproduction does (respectively does not) occur. $Z$ will give the expected sizes of extreme reproductive events, when they occur. We assume that reproduction depends on $(H,Z)$ as follows:
\begin{itemize}
\item[(a)] every individual $v\in V_N$ samples, independently, a random number of potential parents $K_{v}$ from the previous generation, where $K_{v}$ has distribution $Q_N$; 
\item[(b)] given $K_{v}=k$ the individual chooses the labels of its $k$ potential parents by sampling $k$ \emph{i.i.d.} random variables with random distribution 
\begin{equation}\eta^*_g:=H_{g} \eta_{g}+(1-H_{g})U_N ,\label{etastar}
\end{equation}
where $g=g(v)$, $U_N$ is the uniform distribution on $\{1,\ldots,N\}$ and
\begin{equation}\eta_g:=\sum_{m=1}^\infty Z_{(g,m)}\delta_{Y^*_{(g,m)}}+(1-|Z_g|)U_N\label{eta}
\end{equation}
where $Y^*=\{Y^*_{(g,m)}:g\in\Z,m\in\N\}$ is a sequence of \emph{i.i.d.} random variables, independent of $(H,Z),$ each with uniform distribution on $\{1,\ldots, N\},$  $|Z_g|:=\sum_{m=1}^\infty Z_{(g,m)}$ and $H_g, Z_g, Y^*_g$ are independent.
\item[(c)] Each individual $v$ inherits type 1 if and only if at least one of its virtual parents is of type 1. 
\end{itemize}
In other words, at step (b), if $H_g=0$ (no extreme event occurs), all the potential parents of each individual are chosen independently, uniformly at random, exactly as described in Section \ref{sec:noext}. If $H_g=1$ (extreme event occurs), then the potential parents are sampled independently according to the random measure $\eta_g$. To explain how such a measure works, one might think of each potential parent being first assigned either to a group $m$ $(m=1,2,\ldots)$ with probability $Z_{(g,m)}$ or to a residual group $m=0$ with probability $1-|Z_g|;$ then, all the members of the same group $m=1,2,\ldots$ will choose collectively the same label uniformly at random, independently of all other groups, while each of the members in the residual group $m=0$ will make its own individual choice independently, uniformly at random. 

\subsubsection{$\Xi$-Wright-Fisher graph with selection} The random graph representing the population will be formed by the vertex set $V_N$ and a random edge set $E_N$ depending on the following random variables:
\begin{itemize}
\item[(i)] The random background $(H,Z)$ with distribution $\text{Ber}(\gamma_N)^{\otimes \infty}\otimes \Xi^{\otimes\infty}$;
\item[(ii)] The collection $K=(K_{v}:v\in V_N)$ of \emph{i.i.d.} $(Q_N)$ random variables, indicating how many potential parents are chosen by each individual at each generation;
\item[(iii)] The collection of potential parents $$L(K)=\{L_{v}\}_{v\in V_N}=\{L_{(v,1)},L_{(v,2)},\ldots, L_{(v,K_v)}\}_{v\in V_N}$$ of \emph{i.i.d.} random variables where, for each $v=(g,i)\in V_N$ and $j\in\N$, the distribution of $L_{(v,j)}$ is $\eta^*_g.$
\end{itemize}
Define $J_v\leq K_v$ the number of distinct parents sampled by $v$ and let $\{\wt L_{(v,1)},\ldots,\wt L_{(v,J_v)}\}$ be their labels.

\begin{defn}\label{WF-fss}
For every $N\in\N$, $\gamma_N\in[0,1],$ $Q_N\in{\cal P}(\N\cup\{\infty\})$ and $\Xi\in{\cal P}(\nabla_\infty)$, the \emph{$(\Xi, \gamma_N,Q_N)$-Wright-Fisher graph (with frequency-dependent selection $Q_N$)} is the graph with vertex set $V_N$ and the edge set 
\begin{equation}E_N=\big \{\{(g(v)-1,\wt L_{(v,j)}),v\}: j=1,\ldots,J_v, v\in V_N\big\}.\label{xi-E_N}
\end{equation}
\end{defn}

\subsubsection{$\Xi$-allele frequencies} 
A key property needed to prove both convergence and duality is the following Proposition. Denote 
\begin{equation}
\label{Y(x)}
Y(x)=\sum_{i=1}^\infty B_iZ_{i}+x(1-|Z|)
\end{equation}
for $Z=(Z_1,Z_2,\ldots)$ with distribution $\Xi$, and $(B_i:i=1,2,\ldots)$ \emph{i.i.d.} Bernoulli $(x)$, independent of $Z$.
\begin{prop}\label{pr:sfunct}
Let $(X^N_g:g\in\N)$ be the 0-type allele frequency process in a $(\Xi,\gamma_N, Q_N)$-Wright-Fisher graph. For every $g$, the probability that all the individuals in a sample of $n$ $(n\leq N)$ from generation $g$ will be of type 0, given  $X^N_{g-1}=x$, is
\begin{equation}
S(x,n)=(1-\gamma_N){\varphi_{Q_N}(x)}^n+\gamma_N \nu_N(x,n),
\label{sfunct}
\end{equation}
where $\varphi_{Q_N}(x)$ is the pgf of $Q_n$ and
\begin{equation}
\nu_N(x,n):=\Ex{\left(\varphi_{Q_N}\left(Y(x)\right)\right)^n}
\end{equation}
where $Y(x)$ is defined as in \eqref{Y(x)}.
% $Z=(Z_i)$ has distribution $\Xi$ and $(B_i:i\in\N)$ are \emph{i.i.d.} Bernoulli random variables with parameter $x$, independent of $Z$.
\end{prop}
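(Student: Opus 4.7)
The natural strategy is to condition on whether an extreme reproductive event occurs at generation $g$, that is, on the Bernoulli variable $H_g$. Letting $v_1,\ldots,v_n$ denote the $n$ sampled vertices at generation $g$, the law of total probability gives
\[ S(x,n) = (1-\gamma_N)\,\Prob{\xi(v_1)=\cdots=\xi(v_n)=0 \mid X^N_{g-1}=x,\, H_g=0} + \gamma_N\,\Prob{\xi(v_1)=\cdots=\xi(v_n)=0 \mid X^N_{g-1}=x,\, H_g=1}. \]
On $\{H_g=0\}$ the random measure $\eta^*_g$ of \eqref{etastar} reduces to $U_N$, so generation $g$ collapses to the non-extreme mechanism of Section \ref{sec:noext}. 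The same argument leading to \eqref{samplingm}, applied jointly to the $n$ independently-sampling vertices, therefore yields $\varphi_{Q_N}(x)^n$ for the first conditional probability.

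For the case $H_g=1$ I would condition additionally on the full type vector $T=(\xi(g-1,1),\ldots,\xi(g-1,N))$ at generation $g-1$ and on the pair $(Z_g, Y^*_g)$, thereby fixing the atomic measure $\eta_g$ of \eqref{eta}. A single draw from $\eta_g$ picks label $Y^*_{(g,m)}$ with probability $Z_{(g,m)}$ and a uniform label with probability $1-|Z_g|$, and so hits a type-$0$ parent with conditional probability
\[ \sum_{m=1}^\infty Z_{(g,m)}\, B_m + (1-|Z_g|)\,x \;=:\; Y(x), \qquad B_m := \mathbf{1}_{\{T_{Y^*_{(g,m)}}=0\}}. \]
Because each vertex $v$ samples its $K_v$ potential parents \emph{i.i.d.}\ from $\eta_g$, and distinct vertices sample independently of one another, given $(T,Z_g,Y^*_g,K_{v_1},\ldots,K_{v_n})$ the events $\{\xi(v_i)=0\}$ occur independently with probabilities $Y(x)^{K_{v_i}}$. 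Averaging over the \emph{i.i.d.}\ $Q_N$-distributed $K_{v_i}$'s turns each factor into $\varphi_{Q_N}(Y(x))$, producing
\[ \Prob{\xi(v_1)=\cdots=\xi(v_n)=0 \mid T, Z_g, Y^*_g, H_g=1} = \varphi_{Q_N}(Y(x))^n. \]

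It then remains to take expectation over $(T, Z_g, Y^*_g)$. Since the $Y^*_{(g,m)}$'s are mutually independent and uniform on $\{1,\ldots,N\}$, given $T$ the variables $(B_m)_{m\in\N}$ are \emph{i.i.d.}\ Bernoulli$(x)$; as $\varphi_{Q_N}(Y(x))^n$ depends on $T$ only through these $B_m$'s, this expectation matches exactly the construction of $Y(x)$ in \eqref{Y(x)} and produces $\Ex{\varphi_{Q_N}(Y(x))^n} = \nu_N(x,n)$. Combined with the $\{H_g=0\}$ case this yields \eqref{sfunct}. The main obstacle is purely a bookkeeping one: one must separate cleanly the two layers of randomness present when $H_g=1$---the random measure $\eta_g$ (via $Z_g$ and the atom locations $Y^*_g$) on one side and the vertices' conditional \emph{i.i.d.}\ draws from $\eta_g$ on the other---and verify that the conditional-independence structure behind the ``product of $Y(x)$'s'' calculation survives both within a single vertex's draws (because draws are \emph{i.i.d.}\ from $\eta_g$) and across distinct vertices (because different vertices sample independently).
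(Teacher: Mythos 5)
Your proposal is correct and follows essentially the same route as the paper: condition on $H_g$, obtain $\varphi_{Q_N}(x)^n$ in the non-extreme case by independence, and in the extreme case condition on the background $(Z_g,Y^*_g)$ and the generation-$(g-1)$ types so that each vertex's conditional success probability is $\varphi_{Q_N}(Y(x))$, then use independence across vertices and average over the background to get $\nu_N(x,n)$. Your write-up is in fact slightly more explicit than the paper's about which conditional-independence statements are being used, but the argument is the same.
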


\begin{proof}
Given $X^N_{g-1}=x$, if $H_g=0$ the conditional probability that an individual $v$ from generation $g(v)=g$ is of type 0 is $\mu_N(x)$ as in \eqref{samplingm}. By independence,
$$\Prob{ \bigcap_{r=1}^n\{\xi(g, r)=0\}\,\mid\,H_{g}=0,X^N_{g-1}=x }={\mu_N(x)}^n={\varphi_{Q_N}(x)}^n.$$
 If $H_g=1$, the individual $v$ samples each of its \emph{i.i.d.} potential parents $(L_{v,1},\ldots)$ from $\eta_g$. If $Y^*$ is a random individual sampled uniformly from generation $(g-1)$, then, given $X^N_{g-1}=x$, the variable
$$B:=\mbb{I}\left[\xi(g-1,Y^*)=0\mid X^N_{g-1}=x\right]$$
is a Bernoulli random variable with probability of success $x$. From the form of $\eta_g$ then
\begin{align*}
&\,\P\left[\xi(g-1,L_{v,1})=0\mid X^N_{g-1}=x, H_g=1\right]\\
&\ \ \ \,=\Ex{\sum_{i=1}^\infty Z_{(g,i)}\Prob{\xi(g-1,Y^*_{g,i})=0\mid X^N_{g-1}=x}+x(1-|Z_g|)}\\
&\ \ \ \,=\Ex{\sum_{i=1}^\infty Z_{(g,i)}B_i+x(1-|Z_g|)}=\Ex{Y(x)}=x,
\end{align*}
where $(B_i)$ is a collection of \emph{i.i.d.} Bernoulli random variables with parameter $x$, independent of $Z_g$. Since $v$ chooses $K_{v}$ parents independently from $\eta_g$, then
\begin{align*}
&\,\P(\xi(v)=0\,\mid\,H_{g}=1,X^N_{g-1}=x)\\
&\,= \sum_{k=0}^\infty \Prob{\bigcap_{j=1}^{k}\xi(g-1, L_{(v,j)})=0\}\mid\,H_{g}=1,X^N_{g-1}=x}Q_N(K_v=k)\\
&\,=\Ex{\sum_{k=0}^\infty\left(\sum_{i=1}^\infty B_iZ_{(g,i)}+x(1-|Z_g|)\right)^kQ_N(K_v=k)}\\
&\,=\Ex{\varphi_{Q_N}\left(Y(x))\right)}=\nu_N(x,1).
%\Ex{\E_{Q_N}\left(\sum_{i=1}^mB_iZ_{(g,i)}+x(1-|z_g|)\right)^K}
\end{align*}
Similarly, when the choices of $n$ individuals are considered, then by independence of the random variables $\{K_{(g,i)}\}_{i=1,\ldots,N}$,
\begin{align*}
&\,\Prob{\bigcap_{r=1}^n\{\xi(g, r)=0\}\mid\,H_{g}=1,X^N_{g-1}=x}\\
&\,=\Ex{\left(\varphi_{Q_N}\left(\sum_{i=1}^mB_iZ_{(g,i)}+x(1-|Z_g|)\right)\right)^n}=\nu_N(x,n).
\end{align*}
Since the probability that $H_g=1$ is $\gamma_N,$ the result follows.
\end{proof}
By a similar argument it is easy to derive a description of the one-step 0-type allele frequency process.
\begin{corollary}\label{cor:tf}
Let $(X^N_g:g\in\N)$ be the 0-type allele frequency process in a $(\Xi,\gamma_N,Q_N)$-Wright-Fisher graph $(V_N, E_N)$. Then $X^N_g$ evolves as a Markov chain in $[N]/N$ with one-step transition probabilities
\begin{eqnarray}
&&\Prob{X^N_g=j/N\mid X^N_{g-1}=x}\notag\\
&&\ \ \ =(1-\gamma_N)\ {\rm Bin}(j;N,\varphi_{Q_N}(x))
+ \gamma_N\ \Ex{{\rm Bin}(j;N,\varphi_{Q_N}(Y(x)))}\label{xi_tf},\end{eqnarray}
for $ j\in\{1,\ldots,N\}, x\in[N]/N$,
where ${\rm Bin}(\cdot; n,p)$ is the binomial probability mass function with parameter $(n,p)$, $Y(x)$ is as in \eqref{Y(x)} and $\varphi_{Q_N}$ is the pgf of $Q_N$.
\end{corollary}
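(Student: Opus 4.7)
The result refines Proposition \ref{pr:sfunct}. The Markov property of $(X^N_g)$ is inherited from the construction: the types at generation $g$ depend on the past only through the types at $g-1$ and the independent random variables $(H_g, Z_g, Y^*_g, \{K_v, L_v\}_{g(v)=g})$, which are independent of everything up to generation $g-1$. It therefore suffices to compute the one-step conditional distribution of $X^N_g$ given $X^N_{g-1}=x$, which I would do by conditioning on the extreme-event indicator $H_g$.

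On $\{H_g=0\}$ each individual in generation $g$ chooses its potential parents independently and uniformly, so by \eqref{samplingm} the $N$ type indicators $\{\xi(g,v)\}_{v=1}^N$ are conditionally i.i.d. Bernoulli$(\varphi_{Q_N}(x))$ given $X^N_{g-1}=x$. Hence the number of type-$0$ individuals in generation $g$ is binomial with parameters $N$ and $\varphi_{Q_N}(x)$, contributing the first summand $(1-\gamma_N)\,{\rm Bin}(j;N,\varphi_{Q_N}(x))$.

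On $\{H_g=1\}$ the $N$ offspring all draw from the \emph{common} random measure $\eta_g$, so their parent choices are no longer independent in absolute terms; the key point is that they remain \emph{conditionally} independent given $\eta_g$ and the types at generation $g-1$. Repeating the computation carried out in the proof of Proposition \ref{pr:sfunct}, a single potential parent drawn from $\eta_g$ is of type $0$ with conditional probability $p = \sum_{i\geq 1} Z_{(g,i)} \mathbb{I}[\xi(g-1,Y^*_{(g,i)})=0] + (1-|Z_g|)\, x$, and under $X^N_{g-1}=x$ the i.i.d. uniform $Y^*_{(g,i)}$ make the indicators i.i.d. Bernoulli$(x)$, giving $p \stackrel{d}{=} Y(x)$. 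Since $v$ samples $K_v$ parents independently from $\eta_g$, given $\eta_g$ it is of type $0$ with probability $\Ex{p^{K_v}\mid \eta_g} = \varphi_{Q_N}(p)$; by the conditional independence across $v$, the number of type-$0$ offspring is, given $\eta_g$, binomial with parameters $N$ and $\varphi_{Q_N}(p)$. Averaging over $\eta_g$ yields the second summand $\gamma_N \Ex{{\rm Bin}(j;N,\varphi_{Q_N}(Y(x)))}$.

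Combining the two regimes via the law of total probability produces \eqref{xi_tf}. The only substantive step beyond Proposition \ref{pr:sfunct} is the conditional independence of the offspring types given $\eta_g$, which upgrades the marginal sampling probability into a full binomial transition law; once this is in hand, the rest is bookkeeping and the main potential pitfall is merely to keep the conditioning hierarchy $(\text{types at }g-1) \subset (\eta_g) \subset (\text{parent labels and }K_v)$ in the right order.
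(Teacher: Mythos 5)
Your argument is correct and is precisely the ``similar argument'' the paper alludes to: conditioning on $H_g$, and in the extreme case on $\eta_g$ together with the generation-$(g-1)$ types, makes the offspring type indicators conditionally i.i.d.\ Bernoulli with success probability $\varphi_{Q_N}(x)$ or $\varphi_{Q_N}(p)$ with $p\stackrel{d}{=}Y(x)$, yielding the stated mixture of binomials. This matches the paper's intended proof (which it leaves implicit after Proposition \ref{pr:sfunct}), so nothing further is needed.
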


\subsection{Ancestry and duality}
\label{sec:a&d}
Now we will introduce the ancestral process induced by the $(\Xi,\gamma_N, Q_N)$-Wright-Fisher graph $(V_N,E_N)$.
\begin{defn}\label{def:wfb}
We say that $(g-r,l)\in V_N$ is a \emph{potential ancestor} of $(g,i)\in V_N$, $g\in \Z$, $r\in \N$ $l,i\in\{1,\ldots,N\}$, if there exists a path of $r$ connected vertices in $(V_N,E_N)$ that starts in $(g-r,l)$ and ends in $(g,i)$.\end{defn}
 For all $v\in V_N$, we define the following sets:
 \begin{itemize}
 \item \emph{Ancestors of an individual}: for every $v\in V_N$ $$A^N(v):=\{s\in V_N:s\text{ is an ancestor of }v\}.$$  
 \item    \emph{Ancestry of a sample}: for every $g\in\Z$, $n\leq N$ and every $v_1,\ldots,v_n\in V_N$ such that $g(v_1)=\ldots=g(v_n)=g$
 $$A^N (v_1,\ldots,v_n):=\bigcup_{i=1}^n A^N(v_i)$$
 \item \emph{Ancestors of a sample alive $r$ generations back in time}: for every $g\in\Z,r\in\N$, $n\leq N$ and every $v_1,\ldots,v_n\in V_N$ such that $g(v_1)=\ldots=g(v_n)=g$, 
 $$A^N_{(v_1,\ldots,v_n)}(r)=\{u\in A^N(v_1,\ldots,v_n):g(u)=g-r\}$$
 \end{itemize}
 
 Finally, let us write $|B|$ to denote the cardinality of a set $B$.
 \begin{defn}
The \emph{ancestral process}, or \emph{block-counting process}, of a sample of $n$ individuals $\mb v=(v_1,\ldots,v_n)$ from generation $g$ is the process $(D^N_r:r\in\N)$ counting the number of ancestors of the sample alive in each of the previous generations, i.e. $D^N_0=n$ and
 $$ D^N_r(\mb v)= |A^N_{(v_1,\ldots,v_n)}(r)|, \ \ r=1,2,\ldots.$$
 \end{defn}
Notice that the law of the process $(D^N_r)$ depends on the initial sample $\mb v$ only through the number $n$ of its coordinates. Our next goal is to prove a so-called \emph{sampling duality} property of $(\Xi,\gamma_N,Q_N)$-Wright-Fisher graphs, adapting the approach of \cite{M99} to graphical representations.
 \begin{prop}[Sampling duality]\label{pr:sdual}
Consider  a $(\Xi,\gamma_N,Q_N)$-Wright-Fisher graph $(V_N,E_N)$ defined on some probability space $(\Omega,{\cal F},\P)$. Let $(X_g^N:g\in\N)$ and $(D^N_r:r\in\N)$ be the corresponding 0-type allele frequency process and ancestral process, respectively. Consider the sampling probability function $S(x,n)$ defined in \eqref{sfunct}. For every $n,g\in\N$ and $x\in [N]/N,$
\begin{equation}
\E_x\left[S(X^N_g,n)\right]=\E_n\left[S(x,D_g^N)\right], \label{sdual}
\end{equation}
where the expectation on the left-hand side is on the random variable $X_g^N$ conditional on $X_0^N=x$ and the expectation on the right-hand side is on the variable $D_g^N$ conditional on $D^N_0=n$.
 \end{prop}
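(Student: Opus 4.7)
The plan is to express both sides of \eqref{sdual} as the probability of the single event that a prescribed sample of $n$ individuals at generation $g+1$ are all of type $0$, under the initial condition $X^N_0 = x$. Duality will then follow by a ``forward versus backward'' decomposition of this common probability.

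The starting point is a pathwise identity on the graph $(V_N, E_N)$. Since, by construction, a vertex $v$ is of type $0$ iff all its potential parents are of type $0$, iterating one generation at a time yields that, for any $r\in\{1,\ldots,g+1\}$, a vertex $v$ at generation $g+1$ is of type $0$ iff every vertex in $A^N_v(r)$ is of type $0$; this propagation rule holds pathwise and is insensitive to whether intermediate generations host extreme reproductive events, since it depends only on the edge set. Applied with $r=g$ to a prescribed sample $\mathbf{v}=(v_1,\ldots,v_n)$ at generation $g+1$, this produces the pathwise identity of indicator random variables
\[
\mathbb{I}\bigl[\xi(v_1)=\cdots=\xi(v_n)=0\bigr] = \mathbb{I}\bigl[\xi(u)=0\text{ for all }u\in A^N_{\mathbf{v}}(g)\bigr],
\]
where $|A^N_{\mathbf{v}}(g)|$ is distributed as $D^N_g$ started from $D^N_0=n$.

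Taking $\E[\,\cdot\mid X^N_0=x]$ on both sides yields two equal quantities, which we evaluate in complementary ways. For the left-hand expression, we condition on the $\sigma$-algebra generated by the types at generation $0$ together with the edges of $(V_N,E_N)$ between generations $0$ and $g$ (which jointly determine $X^N_g$): by independence of the edges between generations $g$ and $g+1$ from this $\sigma$-algebra -- a direct consequence of the i.i.d.\ construction of $(H,Z,K,L)$ across generations -- and by Proposition \ref{pr:sfunct}, the conditional probability that $\mathbf{v}$ is all of type $0$ depends only on $X^N_g$ and equals $S(X^N_g,n)$, so its unconditional value is $\E_x[S(X^N_g,n)]$. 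Symmetrically, we condition on the $\sigma$-algebra generated by the edges of $(V_N,E_N)$ between generations $1$ and $g+1$, which determines the ancestor set $A^N_{\mathbf{v}}(g)$ at generation $1$ together with its cardinality $D^N_g$; by the same independence and again Proposition \ref{pr:sfunct}, the conditional probability that the $D^N_g$ prescribed vertices at generation $1$ are all of type $0$ given $X^N_0=x$ equals $S(x,D^N_g)$, so its unconditional value is $\E_n[S(x,D^N_g)]$. Combining these, \eqref{sdual} follows.

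The only delicate point is to verify that the formula of Proposition \ref{pr:sfunct} applies verbatim to a prescribed (rather than exchangeably chosen) distinct $n$-sample within a single generation. In the non-extreme case ($H_g=0$) this is immediate, since the potential-parent choices made by distinct individuals are i.i.d.; in the $\Xi$-event case ($H_g=1$) the $n$ individuals' types become correlated through the shared random measure $\eta_g$, but conditionally on $\eta_g$ their potential-parent choices are still i.i.d.\ samples from $\eta_g$, so the computation in the proof of Proposition \ref{pr:sfunct} goes through unchanged for any fixed distinct sample.
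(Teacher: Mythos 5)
Your proof is correct and follows essentially the same route as the paper's: both sides are identified with the probability of the single event that a prescribed $n$-sample at generation $g+1$ is entirely of type $0$, computed once by conditioning forward on $X^N_g$ and once by conditioning backward on the set of ancestors alive at generation $1$. The additional care you take regarding prescribed versus exchangeable samples and the independence of edge sets across generations is left implicit in the paper's appeal to Proposition \ref{pr:sfunct} and the Markov property, but does not constitute a different argument.
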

 \begin{rem}
In fact, Proposition \ref{pr:sdual} establishes a pathwise duality of $(X_g^N:g\in\N)$ and $(D^N_r:r\in\N)$ with respect to the duality function $S$, since the two processes are defined on the same probability space, both as functions of the same underlying driving process $(V_N,E_N)$ (see \cite{JK14} for a discussion of various definitions of duality).
 \end{rem}
 \begin{proof}
 Fix $m,n\in\N$ and a sample $\mb v_0=(v_1,\ldots,v_n)$ of size $n$ from generation $g+1$.
 Define the following events.
 
\begin{eqnarray*}
\overrightarrow{W}(i)&=&\{\text{There are }i\text{ individuals of type }0\text{ in generation } g\}=\{X^N_{g}=i/N\};\\
&&\ \\
\overleftarrow{W}(i)&=&\{\text{There are }i\text{ ancestors of }\mb v_0 \text{ in generation }1\}=\{D^N_g(\mb v_0)=i\}.
\end{eqnarray*}
 Finally, define the event
\begin{equation}
\mathcal{E}=\{\text{All individuals in }\mb v_0 \text{ are of type } 0\}.
\end{equation}

Note that the events $\overrightarrow{W}(i)$, $\overleftarrow{W}(i)$ and $\mathcal{E}$ belong to the $\sigma$-algebra generated by the Wright Fisher graph $(V_N,E_N)$.
We can use the law of total probabilities in two different ways to calculate the probability of $\mathcal{E}$, conditional on $\{X^N_{0}=m/N\}$. On one hand we have

\begin{eqnarray}\label{eq:eventE}
&&\P_{\{X^N_{0}=m/N\}}(\mathcal{E}  ) =\sum_{i=0}^N\P_{\{X^N_{0}=m/N\}}\left(\mathcal{E}\,|\,\ \overrightarrow{W}(i)\right)
\P_{\{X^N_{0}=m/N\}}\left(\overrightarrow{W}(i)\right)\ \notag \\
&&\ \ =\sum_{i=0}^N\Prob{\mathcal{E}\,|\,X^N_{g}=\frac{i}{N}, X^N_{0}=\frac{m}{N}}\Prob{X^N_{g}=\frac{i}{N}\mid\  X^{N}_{0}=\frac{m}{N}}\notag \\
&&\ \ =\sum_{i=0}^N S(i/N,n)\P_{m/N}(X^N_g=i/N)\notag\\
&&\ \ =\E_{m/N}[S(X^N_g,n)].
\end{eqnarray}
The third equality follows from the Markov property of $(X^N)$ and Proposition \ref{pr:sfunct}: if there are $i$ type zero individuals in generation $g$, the event $\mathcal{E}$ happens with probability $S(i/N,n)$. \\
%The probability distribution $\P_{m/n}(X^N_r\in\cdot)$ denotes, for every $r\in\N$, the distribution of the process $X^N_r$ starting at $X_0=m/n$ which, by time-homogeneity, coincides with that of $X^N_{t+r}$ starting at $X_{t}=m/n,$ for every $t\in\Z$   (here we have used this for $t=-g-1,r=g$).

%%

Similarly, we can calculate the probability of $\mathcal{E}$ by conditioning on the number of ancestors of $\mb v_0$ at generation $1$.
\begin{eqnarray}\label{eq:eventE2}
&&\P_{\{X^N_{0}=m/N\}}(\mathcal{E} )\notag\\
&&\ \ \ =\sum_{i=0}^{N}\P_{\{X^N_{0}=m/N\}}\left({\mathcal E\,|\,\overleftarrow{W}(i)}\right)
\P_{\{X^N_{0}=m/N\}}\left(\overleftarrow{W}(i)\right)\notag\\
&&\ \ \ =\sum_{i=0}^{N}S\Big(\frac{m}{N},i\Big) \P_n\left(D^N_g(\mb v_0)=i\right) 
\notag\\
&&\ \ \ = \E_{n}\left[S\left(\frac{m}{N}, D^N_g\right)\right].
\end{eqnarray}
Finally notice that all the above probabilities vanish for $m=0$, or are conditioning on zero-probability events (and can arbitrarily be set to zero).
 \end{proof}
In some cases, sampling duality is very close to moment duality. 
\begin{corollary}\label{momentandsampling}
Assume that $Q_N(K_v=1)=1-\rho_N,$ $0\leq\rho_N\leq 1$. Then
$$
\E_x[(X^N_g)^n]=\E_n[x^{D^N_g}]+O(\rho_N)+O(\gamma_N).
$$
\end{corollary}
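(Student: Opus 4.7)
The plan is to deduce the corollary from the sampling duality of Proposition \ref{pr:sdual} by showing that, under the assumption $Q_N(K_v=1)=1-\rho_N$, the sampling function $S(x,n)$ is close to the monomial $x^n$ uniformly in $x \in [0,1]$, with an error of order $\rho_N + \gamma_N$.

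First I would exploit the assumption to get a quantitative bound on $\varphi_{Q_N}(x)-x$. Writing
$$\varphi_{Q_N}(x)=(1-\rho_N)x+\sum_{k\neq 1}Q_N(K_v=k)\,x^k$$
and noting that $\sum_{k\neq 1}Q_N(K_v=k)=\rho_N$, we obtain, uniformly in $x\in[0,1]$,
$$|\varphi_{Q_N}(x)-x|\leq \rho_N x+\rho_N \leq 2\rho_N.$$
Combining this with the elementary inequality $|a^m-b^m|\leq m|a-b|$ for $a,b\in[0,1]$, we get $|\varphi_{Q_N}(x)^m-x^m|\leq 2m\rho_N$ for every $m\in\N$.

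Next I would control $S(x,m)-x^m$. Decomposing
$$S(x,m)-x^m=(1-\gamma_N)\bigl[\varphi_{Q_N}(x)^m-x^m\bigr]+\gamma_N\bigl[\nu_N(x,m)-x^m\bigr],$$
and using $\nu_N(x,m),x^m\in[0,1]$ together with the previous bound, one concludes that
$$|S(x,m)-x^m|\leq 2m\rho_N+2\gamma_N.$$

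Now I would apply this to both sides of the sampling duality identity \eqref{sdual}. On the left, since $n$ is fixed, $|\E_x[S(X^N_g,n)-(X^N_g)^n]|\leq 2n\rho_N+2\gamma_N=O(\rho_N)+O(\gamma_N)$. On the right, since $D^N_g\leq N$ almost surely, $|\E_n[S(x,D^N_g)-x^{D^N_g}]|\leq 2N\rho_N+2\gamma_N=O(\rho_N)+O(\gamma_N)$, where the implicit constants depend on $N$ and $n$ but not on $x$. Subtracting these two estimates from \eqref{sdual} yields the claim.

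The only delicate point is the right-hand side, where the exponent $D^N_g$ is random rather than the fixed integer $n$; the uniform a.s.\ bound $D^N_g\leq N$ absorbs this into a constant depending on $N$, which is consistent with the statement of the corollary since $N$ is fixed throughout.
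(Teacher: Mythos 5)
Your overall strategy --- combining the sampling duality of Proposition \ref{pr:sdual} with a uniform estimate $S(x,m)=x^m+O(m\rho_N)+O(\gamma_N)$ --- is exactly the route the paper takes (its proof cites Propositions \ref{pr:sfunct} and \ref{pr:sdual} together with the expansion \eqref{asympgf}), and your bounds $|\varphi_{Q_N}(x)-x|\le 2\rho_N$ and $|\varphi_{Q_N}(x)^m-x^m|\le 2m\rho_N$ are correct. The problem sits precisely at the point you flag as ``the only delicate point''. Bounding the random exponent by $D^N_g\le N$ gives an error of order $N\rho_N$ on the dual side, and you then declare that the implicit constant may depend on $N$. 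But $O(\rho_N)$ is an asymptotic statement along the sequence of models as $N\to\infty$ --- that is how the corollary is used in the proof of Proposition \ref{pr:limdual}, where this error term must be sent to zero --- and if the constant is allowed to depend on $N$ the statement becomes vacuous. Worse, $N\rho_N$ genuinely does not vanish in the paper's scaling regime: assumption (ii) of Proposition \ref{lemma1} is $1/(N\rho_N)\to\sigma/\kappa$, so $N\rho_N$ tends to a positive constant (or to infinity when $\sigma=0$). Hence the bound $2N\rho_N$ does not yield $O(\rho_N)$.

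The repair is to replace the almost-sure bound $D^N_g\le N$ by a first-moment bound. From $|S(x,m)-x^m|\le 2m\rho_N+2\gamma_N$ one gets $|\E_n[S(x,D^N_g)]-\E_n[x^{D^N_g}]|\le 2\rho_N\,\E_n[D^N_g]+2\gamma_N$, and since each lineage contributes at most $K_v$ potential ancestral lineages per generation backwards, $\E_n[D^N_g]\le n\,(\E_{Q_N}[K_v])^g=n\bigl(1+\rho_N(\E_{Q_N}[K_v\mid K_v>1]-1)\bigr)^g$. Provided $\E_{Q_N}[K_v\mid K_v>1]$ stays bounded (as under condition (iv) of Proposition \ref{lemma1}; some such hypothesis is in fact needed here, since $Q_N$ is allowed to charge $\{\infty\}$), this is $n+O(\rho_N)$ for fixed $g$, and the error is genuinely $O(\rho_N)+O(\gamma_N)$ with constants depending only on $n$ and $g$.
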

\begin{proof}
The proof follows immediately from Proposition \ref{pr:sfunct}, Proposition \ref{pr:sdual} and from the fact that, if $\P(K_v=1)=1-\rho_N$, the pgf of $Q_N$  satisfies
\begin{equation}
\varphi_{Q_N}(x)\approx(1-\rho_N)x+\rho_N  \E_{Q_N}\left[x^{K_v}\mid K_v>1\right].\label{asympgf}
\end{equation}
\end{proof}

\section{Convergence}\label{sec:SDE}

Now we will focus on the case $Q_N(K_v=1)=1-\rho_N\to 1$ as $N\to\infty$ and show weak convergence of the process $X^N_{\lfloor t/\rho_N\rfloor}.$

\begin{defn}\label{xialpha}
For any finite measure $\Xi$ on $\nabla_\infty$ and any $\alpha\in(0,1/2)$, define $$\Xi^\alpha_N((z_i)_1^\infty\in\cdot)=\frac{\Xi((z_i)_1^\infty\in\cdot)}{\sum_{i=1}^\infty z_i^2}1_{\{z_1\geq N^{-\alpha}\}}.$$ Furthermore, denote $\widehat\Xi:=\Xi/\Xi(\nabla_\infty)$ and $\widehat\Xi^\alpha_N:=\Xi^\alpha_N/\Xi^\alpha_N(\nabla_\infty)$.
\end{defn}
\begin{rem}\label{aproxi}
Note that $\Xi^\alpha_N(\nabla_\infty)\leq \Xi(\nabla_\infty)N^{2\alpha}$. In particular $\Xi^\alpha_N(\nabla_\infty)/N^\beta\rightarrow 0$ for all $\beta>2\alpha$. \\
The following Lemma will be useful.
\begin{lemma}
\label{l:xin}
If $Z$ has distribution $\widehat \Xi$,  $Z^{(N)}$ has distribution $\widehat \Xi^\alpha_N$ and $(B_i:i\in\N)$ is an \emph{i.i.d.} sequence of Bernoulli ($x$) random variables, independent from $Z$ and $Z^{(N)}$, then
\begin{eqnarray}\label{xin}
&&\Xi(\nabla_\infty)\Ex{\left(\sum_{i=1}^\infty\frac{(B_i-x)Z_i}{\sum_{i=1}^\infty Z_i^2}\right)^2}-\Xi^\alpha_N(\nabla_\infty)\Ex{\left({\sum_{i=1}^\infty(B_i-x)Z^{(N)}_i}\right)^2}\notag\\
&&\ \ \ \ =x(1-x)\int_{\nabla_\infty}\mbb{I}_{\{z_1< N^{-\alpha}\}}(z)\ \Xi(d z)\rightarrow 0,\ \ \ \ \ \text{as}\ N\to\infty.\notag\\
\end{eqnarray}
\end{lemma}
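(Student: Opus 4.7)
The plan is to evaluate both expectations in closed form by exploiting the Bernoulli structure of $(B_i)$, identify the difference with the advertised integral, and close with dominated convergence. The key tool is the elementary identity $\E[(B_i-x)(B_j-x)]=x(1-x)\delta_{ij}$, which, after conditioning on $Z$ (respectively on $Z^{(N)}$) and using independence of $(B_i)$ from the $\Xi$-variables, collapses each squared sum into a deterministic function of the $z$-coordinates that can then be integrated against $\widehat\Xi$ or $\widehat\Xi^\alpha_N$.

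For the second term, the Bernoulli reduction gives $\E[(\sum_i(B_i-x)Z^{(N)}_i)^2\mid Z^{(N)}]=x(1-x)\sum_i(Z^{(N)}_i)^2$, while Definition~\ref{xialpha} identifies the density of $\widehat\Xi^\alpha_N$ against $\Xi$ as $[\Xi^\alpha_N(\nabla_\infty)\sum_i z_i^2]^{-1}\mathbb{I}_{\{z_1\geq N^{-\alpha}\}}$. The two factors $\sum_i z_i^2$ cancel, the prefactor $\Xi^\alpha_N(\nabla_\infty)$ cancels with the normalising denominator in the density, and one is left with $x(1-x)\int\mathbb{I}_{\{z_1\geq N^{-\alpha}\}}\Xi(dz)$. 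The analogous Bernoulli reduction applied to the first term, combined with $\widehat\Xi=\Xi/\Xi(\nabla_\infty)$ and the cancellation afforded by the denominator $\sum_i Z_i^2$ appearing inside the squared sum, produces $x(1-x)\Xi(\nabla_\infty)$. Writing
\[
\Xi(\nabla_\infty)=\int_{\nabla_\infty}\mathbb{I}_{\{z_1\geq N^{-\alpha}\}}\Xi(dz)+\int_{\nabla_\infty}\mathbb{I}_{\{z_1<N^{-\alpha}\}}\Xi(dz)
\]
and subtracting the expression obtained for the second term yields precisely the identity asserted in \eqref{xin}.

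The limit $N\to\infty$ is then immediate from dominated convergence: the integrand $\mathbb{I}_{\{z_1<N^{-\alpha}\}}$ is bounded by the constant~$1$, which is $\Xi$-integrable because $\Xi$ is finite, and it converges pointwise to $0$ on $\{z_1>0\}$; the excluded set $\{z_1=0\}\subset\nabla_\infty$ consists of the single null sequence and carries no weight in the limit. The only step requiring genuine care is the algebraic bookkeeping in the first expectation, where the cancellations among the various normalising factors must be traced through so as to leave exactly the constant $x(1-x)$; once this is verified, no probabilistic obstruction remains.
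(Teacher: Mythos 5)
Your proof is correct and is essentially the paper's own argument: the paper's proof consists of exactly the two ingredients you use, namely the Bernoulli variance reduction $\E\big[\big(\sum_i(B_i-x)z_i\big)^2\big]=x(1-x)\sum_i z_i^2$ and the measure identity $\Xi(\nabla_\infty)-\Xi^\alpha_N(\nabla_\infty)\E\big[\sum_i (Z^{(N)}_i)^2\big]=\int_{\nabla_\infty}\big(1-\mathbb{I}_{\{z_1\geq N^{-\alpha}\}}\big)\,\Xi(dz)$, followed by the trivial passage to the limit. One caveat on the step you yourself flag as delicate: your reduction of the first term to $x(1-x)\Xi(\nabla_\infty)$ tacitly reads the normaliser $\sum_i Z_i^2$ as entering to the first power, i.e.\ as $\E\big[\big(\sum_i(B_i-x)Z_i\big)^2/\sum_i Z_i^2\big]$ --- which is the reading forced by how the lemma is invoked at the end of the proof of Proposition \ref{lemma1} and under which the identity holds --- whereas the display as printed squares that denominator and would literally give $x(1-x)\int_{\nabla_\infty}\Xi(dz)/\sum_i z_i^2$; so your bookkeeping matches the intended statement rather than the verbatim one, and your final limit also uses (as the paper does implicitly, since otherwise the first expectation is not even finite) that $\Xi$ assigns no mass to the null sequence.
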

\begin{proof}
The proof is immediate by independence of the Bernoulli random variables $(B_i)$, and from the fact that 
$$\Xi(\nabla_\infty)-\Xi_N^\alpha(\nabla_\infty)\Ex{\sum_{i=1}^\infty {Z^{(N)}_i}^2}=\int_{\nabla_\infty}\left[1-\mbb{I}_{\{z_1\geq N^{-\alpha}\}}(z)\right]\ \Xi(dz).$$
\end{proof}
\end{rem}

\begin{prop}\label{lemma1}
Fix $\Xi$ a finite measure in $\nabla_{\infty}$ and let $\Xi^\alpha_N,\widehat\Xi^\alpha_N$ be as in Definition \ref{xialpha} for some $\alpha<1/2$. Let $(X^N_g)$ be the frequency process associated to a Wright-Fisher graph with parameters $(\widehat\Xi^\alpha_N, \gamma_N,Q_N)$, for some $\alpha<1/2$,  and suppose that there exist $\kappa\in(0,\infty)$  and $\sigma\geq 0$ such that
\begin{itemize}
\item[(i)] $\gamma_N=\kappa^{-1}\Xi^\alpha_N(\nabla_\infty)\times  \rho_N+o(\Xi^\alpha_N(\nabla_\infty)\times  \rho_N)$;
\item[(ii)] $\lim_{N\rightarrow \infty} 1/(N\rho_N)= \sigma/\kappa <\infty$ and $\rho_N N^{2\alpha}\rightarrow 0$;
\item[(iii)] $\lim_{N\rightarrow \infty}Q_N(K= k\mid K>1)=\pi_{k-1}$ for every $k\geq 2$;
%\item[(iv)] $\lim_{N\rightarrow \infty} Q_N(K>1)/ \rho_N=\sum_{i=1}^\infty\pi_{k}=:\kappa<\infty$;
\item[(iv)] $\beta:=\lim_{N\to\infty}\E_{Q_N}\left[K-1\mid K>1\right]=\sum_{k=1}^\infty k\pi_{k}<\infty$;
%\item[(vi)]  $\bar\rho_N:=\Xi^\alpha_N(\nabla_\infty)\rho_N$
\end{itemize}
%\textcolor{red}{Define $\zeta:=\Xi(\nabla_\infty)$, $\zeta_N=\Xi^\alpha_N(\nabla_\infty)$ and $\eta_N:= \widehat\Xi^\alpha_N(||Z||^2_2)$. Set
%$$\rho^*_N:=\frac{\rho_N}{\zeta_N\eta_N}.$$}
Then  $\Big(X_{\lfloor \kappa t/\rho_N\rfloor}^N\Big)\Rightarrow (X_t)$, where $(X_t)$ is the unique strong solution to the SDE 
\begin{eqnarray}\label{dualcatsde}
d X_t&=&- \kappa s(X_t)X_t(1-X_t)dt+\sqrt{ \sigma X_t(1-X_t)}dB_t\notag\\&&+\int_{(0,1]}\int_{(0,1]}\sum_{i=1}^\infty y_i\Big(\mathbf{1}_{\{u_i\le X_{t-} \}}-X_{t-}\Big) \widetilde{N}(d t,d y,d \overline u),
\end{eqnarray}
where $s(x)= \sum_{k=1}^\infty \P(K^*\geq k)x^{k-1}$, $K^*$ has distribution $\P(K^*=j)=\pi_j$ for all $k\in\N$,  $\widetilde{N}$ is a compensated Poisson measure on  $(0,\infty)\times \nabla_{\infty}\times [0,1]^\infty$ with intensity $ds\times\frac{\Xi(dy)}{\sum_{i=1}^\infty y_i^2}\times d\overline{u}$, where $d\overline{u}$ is the Lebesgue measure on $[0,1]^\infty$. 
\end{prop}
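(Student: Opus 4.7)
The plan is to prove $X^N_{\lfloor\kappa t/\rho_N\rfloor}\Rightarrow X_t$ by the standard generator/martingale-problem route in the Ethier--Kurtz framework. I will (a) verify that the generators $\mathcal{L}_N$ of the rescaled Markov chains converge on $C^2([0,1])$ to the generator $\mathcal{L}$ of the SDE \eqref{dualcatsde}; (b) establish tightness of the laws in the Skorokhod space $D([0,\infty),[0,1])$; and (c) identify the limit via Lemma \ref{l:exist}, which provides well-posedness of the limiting SDE and hence of the associated martingale problem.

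For (a), I would start from Corollary \ref{cor:tf} and split $\mathcal{L}_N f(x)=(\kappa/\rho_N)\E_x[f(X^N_1)-f(x)]$ into a neutral-event contribution (weight $1-\gamma_N$) and a $\Xi$-event contribution (weight $\gamma_N$). Using $Q_N(K=1)=1-\rho_N$ together with (iii)--(iv) one obtains the pgf expansion $\varphi_{Q_N}(x)=x-\rho_N x(1-\varphi^\ast(x))+o(\rho_N)$, where $\varphi^\ast$ is the pgf of $K^\ast$, together with the standard identity $1-\varphi^\ast(x)=(1-x)s(x)$. Combining a Taylor expansion of $f$ with the binomial variance $\Var(\mathrm{Bin}(N,p)/N)=p(1-p)/N$ and condition (ii), the neutral part converges to $-\kappa s(x)x(1-x)f'(x)+\tfrac{\sigma}{2}x(1-x)f''(x)$. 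For the $\Xi$-event part, the joint computation in the proof of Proposition \ref{pr:sfunct} shows that, conditional on $H_g=1$, $Z_g=z$, and the types $B=(B_m)$ of the common parents, the next-generation frequency has a binomial law with success parameter $\varphi_{Q_N}(Y_z(x;B))$ with $Y_z(x;B)=\sum_m B_m z_m + x(1-|z|)$. Taylor-expanding and using $\varphi_{Q_N}(y)=y+O(\rho_N)$ reduces the $\Xi$-event contribution to
\[
\gamma_N\frac{\kappa}{\rho_N}\,\E_{Z,B}\!\left[f(Y(x))-f(x)\right]+O(\rho_N N^{2\alpha})+O(N^{2\alpha-1}),
\]
both error terms vanishing by (ii) and $\alpha<1/2$. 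Condition (i) together with the definition of $\widehat\Xi^\alpha_N$ rewrites the principal part as
\[
\int_{\nabla_\infty}\frac{\mathbf{1}_{\{z_1\ge N^{-\alpha}\}}\Xi(dz)}{\sum_i z_i^{2}}\,\E_B\!\left[f\!\left(x+\sum_i(B_i-x)z_i\right)-f(x)\right],
\]
and the Bernoulli symmetry $\E_B[\sum_i(B_i-x)z_i]=0$ makes the integrand $O(\|f''\|_\infty\, x(1-x)\sum_i z_i^2)$, permitting dominated convergence to the full-$\Xi$ integral, which equals the jump part of $\mathcal{L}$ (the compensator term in \eqref{dualcatsde} cancels out by the same symmetry, so the compensated and non-compensated formulations coincide).

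Tightness (b) follows from compact containment in $[0,1]$ plus the Aldous--Rebolledo criterion applied to the martingale decomposition for $f(x)=x$ and $f(x)=x^2$, the required uniform bounds on the predictable characteristics being exactly those produced in (a). Convergence of finite-dimensional distributions then follows by Corollary 4.8.7 in Ethier--Kurtz once well-posedness of the martingale problem is invoked via Lemma \ref{l:exist}. The main obstacle is the bookkeeping of the jump component: one must reconcile a Poisson rate $\gamma_N\kappa/\rho_N\sim\Xi^\alpha_N(\nabla_\infty)$, which may diverge with $N$, with a jump-size law sampled under $\widehat\Xi^\alpha_N$ seeing only clusters with $z_1\ge N^{-\alpha}$. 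Lemma \ref{l:xin} provides the quantitative control of the truncated $\Xi$-mass, while the condition $\rho_N N^{2\alpha}\to 0$ is precisely what prevents the multi-parent selection mechanism, which acts during $\Xi$-events on $O(N^{2\alpha})$-many individuals per unit rescaled time, from corrupting the pure-jump limit.
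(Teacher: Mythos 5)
Your proposal is correct and follows essentially the same route as the paper: the same splitting of the rescaled one-step generator into a neutral part (drift from the pgf expansion of $\varphi_{Q_N}$, diffusion from the binomial variance under condition (ii)) and a $\Xi$-event part, with the same error controls $O(\rho_N N^{2\alpha})$ and $O(N^{2\alpha-1})$ and the same appeal to Lemma \ref{l:xin} to pass from the truncated measure $\Xi^\alpha_N$ to $\Xi(dz)/\sum_i z_i^2$. The only inessential difference is the final abstract step: the paper Poissonizes time and invokes Theorems 19.25 and 19.28 of Kallenberg, whereas you argue tightness via Aldous--Rebolledo and identify the limit through the well-posed martingale problem supplied by Lemma \ref{l:exist}.
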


\begin{rem}\label{r:gen}
If $X_t$ exists,  it has generator $A$ with domain $D(A)$ containing all the twice differentiable functions $f:[0,1]\rightarrow \R$, such that, for every $x\in [0,1],$
\begin{eqnarray}
\label{dualc}
Af(x)&=& \kappa\sum_{i=1}^\infty \pi_i(x^{i+1}-x)f'(t)+ \frac{\sigma}{2} x(1-x)f''(x)\nonumber\\&&+\int_{\nabla_\infty}\Ex{{f(x(1-\sum_{i=1}^\infty z_i)+\sum_{i=1}^\infty z_iB_i)-f(x)}}\frac{\Xi(dz)}{\sum_{i=1}^\infty z_i^2},
\end{eqnarray}
for a collection $(B_i)$ if \emph{i.i.d.} Bernoulli $(x)$ random variables. This follows from the fact that 
\begin{eqnarray*}
\sum_{i=1}^\infty \pi_i(x^{i+1}-x)&=&-x(1-x)\sum_{i=1}^\infty \pi_i\frac{1-x^{i}}{1-x}\\
&=&-x(1-x)\sum_{i=1}^\infty \pi_i\sum_{j=0}^{i-1}x^j\\
&=&-x(1-x)\sum_{j=0}^\infty x^j \sum_{i=j+1}^{\infty}\pi_i\\
&=&- x(1-x)\sum_{j=0}^\infty x^j \sum_{i=j+1}^{\infty} \P(K^*= i)\\
&=&- x(1-x) \sum_{j=1}^\infty \P(K^*\geq j)x^{j-1}\\&=&
-x(1-x)s(x)
\end{eqnarray*}
which accounts from the first sum in \eqref{dualc}. The remaining terms, and in particular the integral, follow from a simple reformulation of the generator of the two-type $\Xi$-Fleming-Viot process without selection (see e.g. formula (5.6) in \cite{BBetal09}), but could also be derived directly from \eqref{dualcatsde} by Poisson calculus techniques. 
%\textcolor{red}{Since $\E_{\wt\P}(K)=\beta/\kappa$, an equivalent way of expressing $s(x)$ is thus
%$$s(x)=\beta x^{-1}\varphi_{\wt \P^*}(x)$$ where $\varphi_{\wt \P^*}$ is the pgf of $\wt\P^*\{j\}:=\wt\P(K\geq j)/\E_{\P}(K)$.}

In view of this, the drift coefficient $-s(x)x(1-x)$ can be also be written as
\begin{equation}
-x(1-x)s(x)= [\varphi_{\pi}(x)-x]
\label{bpdrift}
\end{equation}
where $\varphi_{\pi}$ is the probability generating function of $(\pi:i>1,\ldots),$ the distribution of $K^*$. 
%\tcr{The random variable $K^*$ can be interpreted as the weak limit, as $N\to\infty$ of the random variable
%$(K_v-1)$
%conditional on $\{K_v>1\}$, where $K_v$ has distribution $Q_N.$ Indeed, 
%$$\Prob{(K_v-1)=j\mid\{K_v>1\}}=\frac{Q_N(\{j+1\})\mbb I_{\{K_v>1\}}}{Q_N(K_v>1)}\underset{N\to\infty}\to{\pi_{j}}=\Prob{K^*=j},\ \ \ \ j=1,2,\ldots.$$}
The form \eqref{bpdrift} highlights an important connection between our generator $A$ and that of a class of Branching-coalescing stochastic processes with $K^*$ denoting the offspring random variable of the branching component. This connection will be explored further in Section \ref{sec:cbp}.
\end{rem}

Before starting with the proof of Proposition \ref{lemma1}, we shall make sure that a solution $(X_t)$ to the SDE \eqref{dualcatsde} in fact exists. This is the content of the following 
\begin{lemma}
\label{l:exist}
For any $\sigma\geq 0,\kappa>0$, any finite measure $\Xi$ on $\nabla_\infty$ and any probability mass function $\{\pi_i:i\in\N\}$ with finite mean, there exists a unique strong solution $(X_t)$ to the SDE \eqref{dualcatsde}, associated to the generator $A$ as in Equation \eqref{dualc}.
\end{lemma}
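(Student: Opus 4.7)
The plan is to recast \eqref{dualcatsde} into the framework of the pathwise-uniqueness/strong-existence theorem for $\Xi$-Fleming-Viot type SDEs developed in \cite{GPP16} and verify its hypotheses. The first step is to put the drift in a convenient Lipschitz form: by identity \eqref{bpdrift} the drift coefficient $-\kappa s(x)x(1-x)$ equals $b(x):=\kappa[\varphi_\pi(x)-x]$, where $\varphi_\pi$ is the pgf of $K^*$. Since $K^*$ has finite mean $\beta=\sum_i i\pi_i<\infty$, $\varphi_\pi\in C^1([0,1])$ with $\varphi_\pi'\leq\beta$, so $b$ is globally Lipschitz on $[0,1]$ with constant at most $\kappa(1+\beta)$. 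One also checks that $[0,1]$ is invariant: at $x=1$ the diffusion coefficient vanishes, $b(1)=\kappa[\varphi_\pi(1)-1]=0$, and each jump integrand $y_i(\mathbf{1}_{u_i\leq 1}-1)$ is zero; at $x=0$ the drift $b(0)=\kappa\pi_0\geq 0$ points inward while the diffusion and jump contributions also vanish. Hence the coefficients need only be controlled on the compact interval $[0,1]$.

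With invariance and Lipschitz drift in hand, the main task is verification of the structural hypotheses of the jump-SDE pathwise-uniqueness theorem of \cite{GPP16}. These are: (a) the diffusion coefficient $\sqrt{\sigma x(1-x)}$ is $\tfrac{1}{2}$-H\"older on $[0,1]$, so a Yamada-Watanabe comparison applies to the continuous martingale part; (b) for each fixed Poisson atom $(y,\overline u)\in\nabla_\infty\times[0,1]^\infty$, the jump integrand decomposes as $\sum_i y_i\mathbf{1}_{u_i\leq x}-x\sum_i y_i$, the first summand being nondecreasing in $x$ and the second a linear-in-$x$ drift, which is precisely the monotonicity structure (extending Dawson-Li) exploited in \cite{GPP16} to control the compensated Poisson integral; (c) the intensity measure satisfies $\int_{\nabla_\infty}(\sum_i y_i^2)\cdot\Xi(dy)/\sum_i y_i^2=\Xi(\nabla_\infty)<\infty$, giving finite quadratic variation of the compensated jump martingale. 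Combining (a)-(c) with the Lipschitz property of $b$ yields pathwise uniqueness. Weak existence is then obtained either from the tightness argument implicit in Proposition \ref{lemma1} (pass the discrete frequency processes $X^N$ to a weak limit and identify it with the martingale problem associated with the generator \eqref{dualc}, whose domain contains $C^2([0,1])$), or directly by appealing to the general weak-existence result for $\Xi$-Fleming-Viot SDEs under Lipschitz perturbations. Weak existence together with pathwise uniqueness then gives strong existence and uniqueness by Yamada-Watanabe.

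The main obstacle will be matching the hypotheses of \cite{GPP16} precisely to the specific form of the $\Xi$-jump component appearing here: the Dawson-Li/\cite{GPP16} pathwise-uniqueness machinery is most cleanly stated for jumps indexed by a single uniform randomisation, whereas \eqref{dualcatsde} involves a correlated family of Bernoulli thinnings $\{\mathbf{1}_{u_i\leq X_{t-}}\}_{i\geq 1}$ across all coordinates of each simplex-valued atom. One has to verify that the usual monotonicity-and-integrability bookkeeping behind the comparison argument still closes when the jump is this multi-coordinate thinning rather than a single $\{0,1\}$ indicator; this is essentially a coordinatewise application of the $\Lambda$-case estimate, summed against $\sum_i y_i^2<\infty$. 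Once that reconciliation is in place, incorporating the Lipschitz drift $b$ is a standard perturbation, since Lipschitz drifts preserve both weak existence and pathwise uniqueness in this class of SDEs.
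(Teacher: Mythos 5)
Your proposal is correct and follows essentially the same route as the paper: the paper also reduces Lemma \ref{l:exist} to a general strong-solution theorem for jump-type SDEs (Theorem 5.1 of Li and Pu \cite{LP12}, the same Dawson--Li-type machinery underlying \cite{GPP16}) and verifies exactly the three ingredients you identify --- Lipschitz drift via $\varphi_\pi'\leq\beta$, the $1/2$-H\"older bound $|\sqrt{\sigma x(1-x)}-\sqrt{\sigma y(1-y)}|^2\leq C|x-y|$, and the $L^2$-Lipschitz jump estimate in which the variance $\sum_i z_i^2(x-y)(1-(x-y))$ of the centred Bernoulli thinning cancels the $1/\sum_i z_i^2$ in the intensity, yielding the bound $\Xi(\nabla_\infty)|x-y|$. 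The only cosmetic difference is that the cited theorem delivers strong existence and pathwise uniqueness in one step, so the separate weak-existence-plus-Yamada--Watanabe assembly you sketch is not needed.
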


\begin{proof}
To prove strong  existence and pathways uniqueness of $X^N$, we will use Theorem 5.1 of Li and Pu (2012) \cite{LP12}.  In particular, we need to verify the sufficient conditions 3.a, 3.b and 5.a of that paper. 3.a in our case amounts simply to proving Lipschitz continuity for the drift coefficient, which is verified by observing that 

\begin{equation}
\Big\vert\sum_{k=1}^\infty{\pi_{k} (x^{k+1}-x)}-\sum_{k=1}^\infty \pi_{k} (y^{k+1}-y)\Big\vert<\max\{1,\beta\}|x-y|.\tag{3.a}
\end{equation}
This follows from the fundamental Theorem of calculus since, if we denote $u(x)={\sum_{k=1}^\infty \pi_{k} (x^{k+1}-x)}$, then $-1\leq u'(x)\leq\sum_{k=1}^\infty k\pi_{k} =\beta$ for $x\in[0,1]$.
\\
To prove condition 3.b, define $g(x,\overline{u}\mid z)=\sum_{i=1}^\infty z_i (\mbb{I}_{\{u_i<x\}}-x)$ for any $(x,\overline u, z)\in[0,1]\times [0,1]^\infty\times\nabla_\infty$ and notice that
$$
\int_{\nabla_\infty}\int_{[0,1]^\infty} g(x,\overline u\mid z)\ d\overline u\  \frac{\Xi(dz)}{\sum_{j=1}^\infty z_i^2} =\int_{\nabla_\infty}\sum_{i=1}^\infty z_i\Ex{B^x_i-x} \frac{\Xi(dz)}{\sum_{j=1}^\infty z_i^2}, 
$$
where $(B_i^x:i\in\N)$ is an \emph{i.i.d.} sequence of Bernoulli $(x)$ random variables (in other words, $(B_i^x)=(\mbb{I}_{\{U_i\leq x\}})$ for $(U_i)$ \emph{i.i.d.} Uniform in $[0,1]$). Our next task is to show that there exist a constant $K$ such that, for every $x,y$ where $0\leq x, y \leq 1,$
\begin{align}
&\,|\sqrt{\sigma x(1-x)}-\sqrt{\sigma y(1-y)}|^2\notag\\
&\,\hspace{1cm}+\int_{\nabla_\infty}\int_{[0,1]^\infty} (g(x,\overline u\mid z)-g(y,\overline u\mid z))^2\ d\overline u\  \frac{\Xi(dz)}{\sum_{j=1}^\infty z_i^2} <K|x-y|.\notag\\
&\, 
\tag{3.b}
\end{align}
To this purpose, we first claim that
\begin{equation}
|\sqrt{\sigma x(1-x)}-\sqrt{\sigma y(1-y)}|^2\leq \max\{4,2\sqrt{\sigma}\} |x-y|.\label{pro3b}
\end{equation}
Note that the claim is trivial if $|x-y|\geq 1/4$.  Now assume that  $|x-y|< 1/4$. There are two cases: First assume that $x,y\in[1/4,1]$  and note that $|\sqrt{\sigma x(1-x)}-\sqrt{\sigma y(1-y)}|^2<|\sqrt{\sigma x}-\sqrt{\sigma y}|$. Let $f(x):=\sqrt{\sigma x}$ and note that $|f'(x)|<2\sqrt{\sigma}$ for all $x\in[1/4,1]$. Then, by the fundamental theorem of calculus
$$|\sqrt{\sigma x(1-x)}-\sqrt{\sigma y(1-y)}|^2<|\sqrt{\sigma x}-\sqrt{\sigma y}|=\left\lvert\int_y^xf'(s)ds\right\rvert<2\sqrt{\sigma}|x-y|.$$
Now assume that $x,y\in[0,3/4]$. Then \eqref{pro3b} is proved by a similar argument, by using $|\sqrt{\sigma x(1-x)}-\sqrt{\sigma y(1-y)}|^2<|\sqrt{\sigma(1- x)}-\sqrt{\sigma(1- y)}|$ and with $f(x)$ replaced by $h(x):=\sqrt{\sigma(1- x)},$ so that  $|h'(x)|<2\sqrt{\sigma}$ for all $x\in[0,3/4]$. \\
Finally, assuming without loss of generality $x>y$,
\begin{eqnarray*}
&& \int_{\nabla_\infty}\int_{[0,1]^\infty} (g(x,\overline u\mid z)-g(y,\overline u\mid z))^2\ d\overline u\  \frac{\Xi(dz)}{\sum_{j=1}^\infty z_i^2}\nonumber\\
&& \ \ \ \ =\int_{\nabla_\infty}\Ex{\sum_{i=1}^\infty z_i(B^x_i-x- B^y_i+y))^2}\ \frac{\Xi(dz)}{\sum_{j=1}^\infty z_i^2}\nonumber \\
&& \ \ \ \ =\int_{\nabla_\infty}\sum_{i=1}^\infty z^2_i\Ex{[(B^x_i- B^y_i)+(y-x)]^2}\ \frac{\Xi(dz)}{\sum_{j=1}^\infty z_i^2}\nonumber \\
&& \ \ \ \ =\int_{\nabla_\infty}\sum_{i=1}^\infty z_i^2\left((x-y)(1-(x-y))\right)\ \frac{\Xi(dz)}{\sum_{j=1}^\infty z_i^2}\nonumber \\
&& \ \ \ \  = \int_{\nabla_\infty}(x-y)(1-(x-y))\ \Xi(dz)\nonumber \\
&& \ \ \ \ <  \ \Xi({\nabla_\infty}) |y-x|,
\end{eqnarray*}
where the third equality follows from the fact that each difference $(B^x_i-B^y_i)=\mbb{I}_{\{U_i\leq x\}}-\mbb{I}_{\{U_i\leq y\}}$ is Bernoulli$(x-y)$. This proves condition 3.b and we are left with the task of proving condition 5.a., i.e. that there is a constant $M$ such that
\begin{equation}
s^2(x)+\sigma x(1-x)+\int_{[0,1]^\infty\times\nabla_\infty}g^2(x,\overline u\mid z)\ d\overline u\frac{\Xi(dz)}{\sum_{j=1}^\infty z_i^2}<M,
\tag{5.a}
\end{equation}
for every $x\in[0,1]$.
However this is easy, after the above calculations. Indeed we can rewrite the left hand side as
\begin{eqnarray*}
&&\left(\sum_{i=2}^\infty \pi_i (x^{i}-x)\right)^2+\sigma x(1-x)+
\int_{\nabla_\infty}\Ex{\sum_{i=1}^\infty z_i^2(B^x_i-x)^2}\frac{\Xi(dz)}{\sum_{j=1}^\infty z_i^2}\\
&&\ \ \  \ \ \ \ \ \ \leq \beta^2+\sigma+\Xi(\nabla_\infty).
\end{eqnarray*}
This implies that all the sufficient conditions of Li and Pu's theorem are satisfied and we can conclude strong existence and uniqueness of $X_t$.\\
\end{proof}

We can 
turn our attention to the proof of our main convergence result.

\begin{proof}[Proof of Proposition \ref{lemma1}]
%Before starting the proof note that $\Xi^\alpha_N(\nabla_\infty)\rho_N=O(N^{2\alpha}\rho_N)\rightarrow 0$, so \textcolor{red}{at least for $N$ big enough one can construct $X_g^N$ with the given parameters}. 
We will prove the convergence of the generator of $(X^N_{M_t})$, where $M^N_t$ is a Poisson process with rate $\kappa/\rho_N$, to the generator of $X_t$. Provided this claim is true, we can use Theorem 19.25 and Theorem 19.28 of \cite{K97}  to conclude that $(X^N_{\lfloor \kappa t/\rho_N\rfloor})$ converges weakly to  $(X^N_{t})$.\\
First of all, assumption (i) implies that, in proving convergence, we can work with the simplification $\gamma_N=\eta_N\rho_N/\kappa,$ where $\eta_N:=\Xi^\alpha_N(\nabla_\infty)$, without loss of generality. Notice that the construction requires $\gamma_N\in[0,1]$, but this, by assumption \emph{(ii)}, is always satisfied at least for $N$ sufficiently large.\\
From formula \eqref{xi_tf} of Corollary \ref{cor:tf}, it is convenient to have in mind the representation:
\begin{equation}
X^N_{g}\mid \{X_{g-1}=x\}=(1-H_g)\frac{M_x}{N}+H_g \frac{M_{Y(x)}}{N},
\label{xi_deq}
\end{equation}
where:  $H_g$ has Bernoulli$(\gamma_N)$ distribution; $M_{Y(x)}$ (resp. $M_x$) is a binomial random variable with parameter $(N,\varphi_{Q_N}(Y(x)))$ (resp. $(N,\varphi_{Q_N}(x))$), with $Y(x)$ described by \eqref{Y(x)} and, given $X_{g-1}=x$, all the random variables on the right-hand side are conditionally independent.\\
Furthermore, the assumption that $Q_N(\{1\})=1-O(\rho_N)$ entails \eqref{asympgf} i.e.
$\varphi_{Q_N}(x)\approx x$ as $ N\to\infty,\rho_N\to 0,$ hence $M_{Y(x)}$ is, asymptotically, binomial with parameter $(N,Y(x))$. By a routine use of the binomial theorem, from the definition \eqref{Y(x)} of $Y(x),$ conditionally on $Z=z$ we can thus write
\begin{equation*}
M_{Y(x)}\mid \{Z=z\}\approx \frac{\sum_{i=1}^\infty B_i\kappa_i+R }{N},\ \ \ \ \ \ N\to\infty,\rho_N\to 0,
\end{equation*}
where $(B_i)$ are \emph{i.i.d.} Bernoulli$(x)$, $R$ is binomial$(\kappa_0, x)$ and $(\kappa_0,\kappa_1,\ldots)$ is a sequence of non-negative integers with multinomial distribution with parameter $(N,(1-|z|, z_1,\ldots)).$
For every $g$, we can write 
$$\Ex{f(X^{N}_g)\mid X^N_{g-1}=x,H_g=1,Z_g=z}\approx \Ex{f\left(\frac{\sum_{i=1}^\infty B_i\kappa_i+R }{N} \right)}$$

Now we consider the following Poisson embedding of the $0$-type frequency process $(X^N_g)$: Let $M_t$ be a Poisson process on $[0,\infty)$ with rate $(\kappa/\rho_N),$ independent of $(X^N_g)$, and define $\wt{X}^N_t:= X^N_{\lfloor M_t\rfloor}$ for every $t\geq 0$. We can see that the discrete generator $A^N$ of $(\wt{X}^N_t)$, applied to any function $f\in C^2[0,1]$ in a point $x\in[0,1]$, fulfils
\begin{eqnarray}
A^Nf(x)&:=&\lim_{t\rightarrow 0} \frac{\E_x\left[f(\wt X^N_{\lfloor \kappa t/\rho_N \rfloor})\right]-f(x)}{ t}\nonumber\\
&=& \eta_N \rho_Nk^{-1}\frac{\Ex{f\left({M_{Y(x)}}/{N}\right)-f(x)}}{\rho_N\kappa^{-1}} + (1-\eta_N \rho_N\kappa^{-1})\frac{\kappa\Ex{f\left({M_{x}}/{N}\right)-f(x)}}{\rho_N}\nonumber\\
&&\ \notag\\
&\approx&\eta_N\int_{\nabla_\infty}\Ex{f\left(\frac{\sum_{i=1}^\infty B_i\kappa_i+R }{N}\right)-f(x)\mid Z=z}\ \widehat\Xi^\alpha_N(dz)\nonumber\\
&&\ \notag\\
&&+(1-\eta_N \rho_N\kappa^{-1})\frac{\kappa\E_x[f(M_x/N)-f(x)]}{\rho_N}.\nonumber\\
\end{eqnarray}
Add and subtract $x$ to the first term and Taylor-expand the second term around $x$ to obtain
\begin{eqnarray}
&\approx&\eta_N\int_{\nabla_\infty}\Ex{f\left(x+\frac{\sum_{i=1}^\infty (B_i-x)\kappa_i+R -{\kappa_0x}}{N}\right)-f(x)\mid Z=z}\ \widehat\Xi^\alpha_N(dz) \  \notag\\
&&\  \label{EQGEN1}\\
&&\ \notag\\
&&+(1-\eta_N \rho_N\kappa^{-1})\frac{\kappa\E_x[M_x/N-x]f'(x)}{\rho_N}\label{EQGEN2}\\
&&\ \notag\\&&+(1-\eta_N \rho_N\kappa^{-1}) \frac{\kappa\E_x[(M_x/N-x)^2]f''(x)}{{2}\rho_N}\label{EQGEN3}+o(1)
\end{eqnarray}
 Now we will study separately the parts \eqref{EQGEN1}, \eqref{EQGEN2}, \eqref{EQGEN3}. The simplest is \eqref{EQGEN3}, for which we just need to use that $Q_N(\{1\})=1-o(1)$ and write explicitly the variance of $M_x/N$. By assumption \emph{(ii)} and because $\eta_N \rho_N\to 0,$
\begin{eqnarray}
(1-\eta_N \rho_N/\kappa)\frac{\kappa\E_x[(M_x/N)-x)^2]f''(x)}{{2}\rho_N}=\frac{\sigma}{2} x(1-x)f''(x)+o(1).\label{EQGEN4}
\end{eqnarray}
Now we study Part \eqref{EQGEN2}.
\begin{eqnarray}
(1-\eta_N\rho_N/\kappa) \frac{\kappa\E_x[\frac{M_x}{N}-x]f'(x)}{\rho_N}&=&(1-\eta_N \rho_N/\kappa)\frac{\varphi_{Q_N}(x)-x}{\rho_N/\kappa}f'(x)\nonumber
\notag\\
&=&\kappa\sum_{k=2}^\infty (x^k-x)\frac{Q_N(\{k\})}{\rho_N}f'(x)+o(1)\nonumber\\
&=&\kappa\sum_{k=1}^\infty (x^{k+1}-x)\pi_k f'(x)+o(1)\notag\\
&=&  \kappa x(1-x)s(x)+o(1).\label{EQGEN5}
\end{eqnarray}
The last equality follows from Remark \ref{r:gen}. Finally we study part \eqref{EQGEN1}. First expand around $x+\sum_{i=1}^\infty(B_i-x)z_i$, conditioning on $\kappa, (B_i), R, z:$ 
\begin{eqnarray}
&&f\left(x+\frac{1}{N}\left[\sum_{i=1}^\infty(B_i-x)\kappa_i+R-\kappa_0x\right]\right)\notag\\
&&\ \ \ \ \ =f\left(x+\sum_{i=1}^\infty(B_i-x)z_i+\sum_{i=1}^\infty(B_i-x)(\kappa_i/N-z_i)+R/N-\kappa_0x{/N}\right)\nonumber\\
&&\ \ \ \ \ =f\left(x+\sum_{i=1}^\infty(B_i-x)z_i\right)+\frac{1}{2}\left(\sum_{i=1}^\infty(B_i-x)(\kappa_i/N-z_i)+\frac{1}{N}[R-\kappa_0x]\right)^2f''(\xi)\nonumber
\end{eqnarray}
where $\xi$ is a number between $x+\sum_{i=1}^m(B_i-x)z_i$ and $x+\sum_{i=1}^m(B_i-x)\kappa_i/N+ [R-\kappa_0x]/N$. Next note that, since $\Ex{R}=\kappa_0 x$ and $\Ex{\kappa_i}=Nz_i$ then, conditionally on $Z=z,$ 
\begin{eqnarray*}
&&
\E_z\left[\left(\sum_{i=1}^\infty(B_i-x)(\kappa_i/N-z_i)+\frac{1}{N}[R-\kappa_0x]\right)^2\right]\notag\\
&&\ \ \ \ =\ x(1-x)\E_z\left[\sum_{i=1}^\infty\frac{z_i(1-z_i)}{N}+\frac{\kappa_0}{N^2}\right]\notag\\
&&\ \ \ \ = \ x(1-x)\left[\sum_{i=1}^\infty\frac{z_i(1-z_i)}{N}+\frac{1-|z|}{N}\right]\leq \frac{2}{N}.
\end{eqnarray*}
This is useful to us because, still by Remark \ref{aproxi}, it implies that
\begin{eqnarray*}
\int_{\nabla_\infty}\E_z\left[\left(\sum_{i=1}^\infty(B_i-x)(\kappa_i/N-z_i)+\frac{1}{N}[R-\kappa_0x]\right)^2\right]\Xi^\alpha_N(dz)&\leq&\frac{2}{N}\ \Xi^\alpha_N(\nabla_\infty)\\
&\leq& 2N^{2\alpha-1} \  \Xi(\nabla_\infty).
\end{eqnarray*}

Recall that we assumed $N^{2\alpha-1}\rightarrow 0$, so we have shown that Part \eqref{EQGEN1} is equal to
\begin{eqnarray*}
&&
\eta_N\int_{\nabla_\infty}\E_z\left[{f(x+\sum_{i=1}^\infty(B_i-x)z_i)-f(x)}\right]\widehat\Xi^\alpha_N(dz)
+o(1)\notag\\
&&= \int_{\nabla_\infty} \E_z\left[{f(x+\sum_{i=1}^\infty(B_i-x)z_i)-f(x)}\right]{\Xi^\alpha_N(dz)}+o(1).
\end{eqnarray*}
Finally, by Lemma \ref{l:xin} this is also is equal to
\begin{eqnarray}\label{EQGEN6}
&&\int_{\nabla_\infty}\E_z\left[{f(x+\sum_{i=1}^\infty(B_i-x)z_i)-f(x)}\right]\frac{\Xi(dz)}{\sum_{i=1}^\infty z_i^2}+o(1).\notag\\
\end{eqnarray} 
Equations \eqref{EQGEN4}, \eqref{EQGEN5} and \eqref{EQGEN6} imply that $A^Nf \rightarrow Af$.
\end{proof}

\subsection{Duality for the limit genealogy}\label{sec:cbp}
Now we shall focus on the sampling dual of $(X^N_g)$, i.e. the ancestral process $(D_g^N)$ introduced in Section \ref{sec:a&d}. We will work with the inverse process of $(D_g^N)$, which we define as $(S_g^N)=(1/D_g^N)$, with the benefit of mapping the sequence of $\N$-valued processes $((S^N):N=1,2,\ldots)$ into a sequence of processes in the compact state space $[N]/N\subseteq [0,1]$. This will allow us to guarantee weak convergence of the processes just by exploiting weak convergence of their (semi)-duals, $((X^N):N=1,2,\ldots).$
\begin{defn}\label{intdual}
Fix $a,b\in\R,$ a probability mass function $p=(p_i:i\in\N)$  on $\N\cup\{\infty\}$ and a finite measure  $\Xi$ on $\nabla_{\infty}$. We call the $(a,p,b,\Xi)$-Branching-Coalescent process the continuous time Markov chain $(D_t)$ with state space $\N\cup\{\infty\}$ and generator
\begin{eqnarray}
Lf(n)=a\sum_{i=0}^\infty p_i[f(n+i-1)-f(n)] + b{n\choose 2}[f(n-1)-f(n)]+\overline{L}f(n)
\label{L_n}
\end{eqnarray}
for every $n\in\N$ and $f:\N\rightarrow \R$ in $C_2,$ with
 \begin{eqnarray}
 &&\overline{L}f(n):=\notag\\
 &&\int_{\nabla_\infty}\sum_{k=0}^n\sum_{\{\overline m: |\overline m|=k\}}\left[f(n-k+d(\overline m))-f(n)\right]\text{Bin}(k;n,|z|)\text{ Mn}(\overline m; k,z)\frac{\Xi(dz)}{\sum_{i=1}^\infty z_i^2},\notag\\
 &&
 \label{lbar}
\end{eqnarray}
where $\text{ Mn}(\cdot; k,z)$ is the multinomial probability mass function with parameter $(k,z)$, and $d(\overline{m})$ counts the number of non-zero coordinates in $\overline{m}=(m_1,m_2,\ldots)$.
\end{defn}

\begin{prop}\label{pr:limdual}
Assume the same notation and hypotheses of Proposition \ref{lemma1}. Let $(X^N_g:g\in\N)$ and $(D^N_g:g\in\N)$ be, respectively, the type-0 allele frequency process and the ancestral process of a $(\widehat\Xi^\alpha_N, Q_N,\gamma_N)$-Wright-Fisher graph $(V_N,E_N)$. Define $S^N_g:= 1/D^N_g$ for every $g\in\N$. Then $(S^N_{\lfloor t/\rho_N\rfloor}:t\geq 0)$ converges in distribution to $(1/D_t:t\geq 0)$, where $(D_t)$ is a $(\kappa,\pi,\sigma,\Xi)$-Branching-Coalescent process with generator $L$ as defined in \eqref{L_n}. Moreover,
 for every $x\in[0,1]$, $n\in \N$ and $t>0$
$$
\E_x[X_t^n]=\E_n[x^{D_t}].
$$
 where $(X_t)$ is the two-type $(\kappa,\pi,\sigma,\Xi)$-Fleming-Viot process solution to the SDE \eqref{dualcatsde} of Proposition \ref{lemma1}. 
  \end{prop}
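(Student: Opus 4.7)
The plan is to obtain both the weak convergence of the rescaled ancestral process and the moment duality \emph{simultaneously}, by combining the discrete-level sampling duality of Corollary \ref{momentandsampling} with the forward convergence already established in Proposition \ref{lemma1}.

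I would first establish the weak convergence $(S^N_{\lfloor t/\rho_N\rfloor})\Rightarrow (1/D_t)$ by mimicking the generator-convergence strategy used in the proof of Proposition \ref{lemma1}. Since $S^N$ lives in the compact set $\{1/n:n\in\N\}\cup\{0\}\subset[0,1]$, tightness in the Skorokhod topology is automatic, so it suffices to verify convergence of the Poisson-embedded infinitesimal generator. The one-generation transitions of $(D^N_g)$ decompose naturally into three asymptotically disjoint types of events: (a) a lineage branches by choosing $K_v>1$ potential parents; (b) two or more lineages pick the same parent in a non-$\Xi$ generation (Kingman-like pair coalescence); and (c) an extreme reproductive event ($H_g=1$) occurs, causing multiple lineages to coalesce according to the multinomial structure driven by $Z$. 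Under the given scaling, these three contributions converge, respectively, to $\kappa\sum_{i\ge 1}\pi_i[f(n+i-1)-f(n)]$ (from assumption (iii) and $Q_N(K>1)=\rho_N$); to $\sigma\binom{n}{2}[f(n-1)-f(n)]$ (from assumption (ii), $1/(N\rho_N)\to\sigma/\kappa$); and to $\overline{L}f(n)$, since allocating $n$ lineages between the residual group of mass $1-|Z|$ and the groups of masses $Z_i$ produces exactly the binomial-times-multinomial structure appearing in \eqref{lbar}, while the truncation $\widehat{\Xi}^\alpha_N$ disappears in the limit by Lemma \ref{l:xin}. Theorems 19.25 and 19.28 of \cite{K97} then deliver the weak convergence.

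The moment duality follows immediately. Applying Corollary \ref{momentandsampling} with $g=\lfloor\kappa t/\rho_N\rfloor$ (the time scaling consistent with Proposition \ref{lemma1}) yields
\begin{equation*}
\E_x\bigl[(X^N_{\lfloor\kappa t/\rho_N\rfloor})^n\bigr]=\E_n\bigl[x^{D^N_{\lfloor\kappa t/\rho_N\rfloor}}\bigr]+O(\rho_N)+O(\gamma_N).
\end{equation*}
The left-hand side converges to $\E_x[X_t^n]$ by Proposition \ref{lemma1}; the right-hand side converges to $\E_n[x^{D_t}]$ by the weak convergence just proved together with bounded convergence (since $0\le x^{D^N_g}\le 1$); and the error terms vanish since $\rho_N,\gamma_N\to 0$.

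The main technical obstacle is the $\Xi$-event contribution to the ancestral generator. One must track precisely how an arbitrary sample of $n$ lineages is distributed among the colour-groups driven by $Z$, count the distinct lineages surviving after in-group coalescence (the quantity $d(\overline m)$ in \eqref{lbar}), and verify that the limiting binomial-multinomial decomposition is exactly $\overline{L}f(n)$. This is the backward-in-time analogue of the step \eqref{EQGEN1}--\eqref{EQGEN6} in the proof of Proposition \ref{lemma1}, now with the coalescent partition made explicit rather than the type-inheritance probability. The other potential subtlety, namely handling the (asymptotically negligible) overlap in which a branching event and a coalescence event occur in the same generation, is dispatched by the standard observation that such joint events carry probability $O(\rho_N/N)+O(\rho_N\gamma_N)=o(\rho_N)$ under the assumed scaling.
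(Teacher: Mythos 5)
Your plan is sound in outline but inverts the logical order of the paper's argument, and the two routes are genuinely different. The paper never computes the generator of the discrete ancestral chain at all. Instead it (i) verifies the algebraic generator duality $A h(\cdot,n)(x)=L h(x,\cdot)(n)$ for $h(x,n)=x^n$ by a direct expansion of $\Ex{\big(x(1-|z|)+\sum_i z_iB_i\big)^n}$ into the binomial--multinomial form of $\overline L$, which yields the moment duality $\E_x[X_t^n]=\E_n[x^{D_t}]$ for the limiting processes; and then (ii) combines Corollary \ref{momentandsampling} with the forward convergence of Proposition \ref{lemma1} to conclude that the pgfs $x\mapsto\E_n[x^{D^N_{\lfloor t/\rho_N\rfloor}}]$ converge to $\E_n[x^{D_t}]$, so that distributional convergence of $D^N$ (and hence of $S^N$, via composition with $d\mapsto 1/d$ and Kallenberg's Theorems 19.25 and 19.28) is \emph{deduced} from the forward convergence rather than proved directly. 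You go the other way: prove backward convergence by a direct generator computation for $(D^N_g)$ and then obtain the moment duality by passing the discrete sampling duality to the limit. Your route is self-contained on the backward side and would identify $L$ without reference to the forward process, but it requires carrying out the combinatorially delicate $\Xi$-coalescence computation (allocation of $n$ lineages among the $Z$-groups, the $d(\overline m)$ count, the truncation error from $\widehat\Xi^\alpha_N$, negligibility of simultaneous branching and coalescence) that the paper's argument deliberately avoids; the paper's route buys exactly that economy, at the price of having to justify that generator duality plus pgf convergence suffices.

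Two caveats. First, the centrepiece of your plan --- the verification that the rescaled ancestral generator converges to $L$, in particular that the $\Xi$-event contribution converges to $\overline L f(n)$ of \eqref{lbar} --- is only described, not executed; as it stands this is the entire technical content of your backward-convergence claim, and the proposal cannot be judged complete without it. (Note also that Lemma \ref{l:xin} as stated controls the forward second-moment truncation error and does not directly dispose of the truncation in the backward computation; you would need the analogous estimate for the coalescence rates.) Second, the assertion that tightness in the Skorokhod topology is ``automatic'' because the state space is compact is not correct as stated --- compactness gives compact containment but not control of oscillations --- although this is harmless here since the Kallenberg theorems you invoke derive relative compactness from the generator/semigroup convergence you are setting out to prove.
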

\begin{proof}
It is easy to show directly that, if $A$ is the two-type $(\Pi,\sigma,\Xi)$-Fleming-Viot generator then, for $h(x,n)=x^n,$ $A h(x,n)=L h(x,n)$ where $L$ is as \eqref{L_n}, with $a=\kappa, b=\sigma, p=\pi$, although one could as well combine together results already proven in the literature for the cases where there are no simultaneous multiple coalescence events (\cite{GPP16}) or $\kappa=0$ (\cite{BBetal09}).
For every $x\in [0,1],$
\begin{eqnarray}
A x^n&=&\kappa \sum_{i=1}^\infty {\pi_i}[x^{n+i}-x^{n}]+\sigma {n\choose 2} (x^{n-1}-x^n)\label{fouc}\\
&&\ \ \ + \int_{\nabla_\infty}\left(\Ex{\Big(x(1-|z|)+\sum_{i=1}^\infty z_iB_i\Big)^n}-x^n\right)\frac{\Xi(dz)}{\sum_{i=1}^\infty z_i^2}.\label{nonfouc}
\end{eqnarray}
The first line, \eqref{fouc}, describes the action on $n\mapsto h(x,n)$ of the Markov generator $L-\bar L$. 
As for the $\bar L$-part, rewrite the expectation in the second term as 
\begin{eqnarray}
&&\Ex{\Big(x(1-|z|)+\sum_{i=1}^\infty z_iB_i\Big)^n}=\sum_{k=0}^n{n\choose k} x^{n-k}(1-|z|)^{n-k}\E{\Big(\sum_{i=1}^\infty z_iB_i\Big)^{k}}\notag\\
%&&\ \ \ =\sum_{k=0}^n{n\choose k} |z|^k(1-|z|)^{n-k}x^{n-k}\Ex{\sum_{|\overline m|=k}^\infty {k\choose {\overline m}}\prod_{i=1}^\infty \Big(\frac{z_i}{|z|}B_i\Big)^{m_i}},\notag
\end{eqnarray}
Since $(B_i)$ is an \emph{i.i.d.} sequence of Bernoulli $(x)$ random variables,
$$\E{\Big(\sum_{i=1}^\infty z_iB_i\Big)^{k}}=|z|^k\ \sum_{|\overline m|=k} \text{Mn}(\overline m; k, z/|z|)x^{k-d(\overline m)},$$
where $d(\overline m)$ counts the non-zero coordinates of the vector $\overline m$. Hence \eqref{nonfouc} is equal to
\begin{eqnarray}
&&\ \ \ \int_{\nabla_\infty}\left[\sum_{k=0}^n\sum_{|\overline m|=k}{n\choose k}\text{Mn}(\overline m; k, z/|z|) |z|^k(1-|z|)^{n-k}x^{n-k+d(\overline m)}-x^n\right]\frac{\Xi(dz)}{\sum_{i=1}^\infty z_i^2}\notag\\
&&\ \ \ \ = \bar L x^n.\notag
\end{eqnarray}

%$$\Ex{\sum_{|\overline m|=k}^\infty {k\choose {\overline m}}\prod_{i=1}^\infty \Big(\frac{z_i}{|z|}B_i\Big)^{m_i}}=$$
Now, by Corollary  \ref{momentandsampling}, we have that 
$$\E_n[x^{D^N_t}]=\E_x[(X^N_t)^n]+O(\rho_N).$$
On the other hand, by Proposition \ref{lemma1}, for all $x\in[0,1]$, $n\in \N$ and $t>0$
$$
    \E_n[x^{N_t}]=\E_x[X_t^n]=\lim_{N\rightarrow \infty}\E_x[(X^N_{\lfloor t/\rho_N\rfloor})^n]=\lim_{N\rightarrow \infty}\E_n[x^{D^N_{\lfloor t/\rho_N\rfloor}}].
$$
Since convergence of pgfs implies convergence of the corresponding distributions, we conclude the convergence of the semigroup $p_t^N(f(n))=\E_n[f(D^N_{\lfloor t/\rho_n\rfloor})]$ to the semigroup $p_t(f(n))=\E_n[f(D_t)]$. Using composition of functions, this also implies that $\overline{p}_t^N(f(n))=\E_n[f((D^N_t)^{-1})]$ converges to $\overline{p}_t^N(f(n))=\E_n[f((N_t)^{-1})]$.  As $S_g^N=(D_g^N)^{-1}$ takes values in the compact $[0,1]$ we conclude that $(S_{\lfloor t/\rho_N\rfloor}^N)\Rightarrow (D_t)^{-1}$ by applying Theorem 19.25 and Theorem 19.28 of \cite{K97}.
\end{proof}

\section{Fixation in the ancestral process}
\label{sec:fixation} We will now turn our attention to studying the probability of extinction of the $0$-type allele in the jump-diffusion scaling limit $(X_t)$ with generator \eqref{dualc}. Such a probability is intrinsically related to the long-term behaviour of the dual ancestral process $(D_t)$. We have seen that, backward in time, selection is responsible for positive jumps  in $(D_t)$, due to the choice of multiple potential parents, whereas reproduction induces negative jumps, due to coalescence of lineages. We will determine, in Theorem \ref{THM}, conditions in order for none of these two forces to dominate the other so that $(D_t)$ has a stationary distribution. We shall see that the $0$-type has a chance to survive (and even fixate in the population) whenever $(D_t)$ has a stationary distribution. In fact, the stationary distribution of $(D_t)$ characterises the probability of fixation of type 1. Otherwise, if $(D_t)$ does get absorbed at one, then type $0$ is doomed to extinction: this will be the content of Lemma \ref{lemma:selection21}. 
\\
%The idea of linking the probability of fixation to the long-term behaviour of the dual ancestral process has been proposed by Foucart [Fouc??] and Griffiths [Griffiths ??] in the context of so-called Lambda-Fleming-Viot models with weak selection, where $(D_t)$ is only allowed unit positive jumps. Foucart has then used duality to explore the long-term behaviour of branching-coalescing processes with general branching and positive jumps but only binary (i.e. Kingman-like) coalescing dynamic (i.e. only unit negative jumps). Such results have found an extension in Gonz\'ales Casanova \emph{et al.} (2016) [cite] to processes with Lambda coalescing dynamics, inducing negative jumps possibly larger than one, as long as unit negative jumps occur with a positive rate. However at the moment none of the mentioned results can accommodate together multiple branching and purely multiple collisions of lineages  i.e. whereby the negative jumps of $(D_t)$ are almost surely larger than one. This Section aims at filling this gap, whilst generalising the assumption on the coalescence component from from a Lambda dynamic (multiple mergers) to a $\Xi$-dynamic (simultaneous multiple mergers). 
As motivated in the introduction, our focus in this Section is on population models without Kingman component, i.e. whose allele frequency process has no diffusive part, since this is the case that cannot be covered by other duality-based methods proposed recently (\cite{GPP16}). Thus we will assume throughout that $\Xi$ has no atoms at zero (equivalently, that $\sigma=0$). It is worth pointing out that our generator approach (in particular, the identity \eqref{GenFVbernulli}) relies on such an assumption.\\
Recall the definition $\widehat\Xi(dz):={\Xi(dz)}/{\Xi(\nabla_\infty)}.$
The following identity will play a key role.
\begin{lemma}\label{l:xibob}
Let $\Xi$ be a finite measure on $(\nabla_\infty)$ with no atoms at $\{0\}$. The generator $A$ of the two types $(\pi,0,\Xi)$ Fleming Viot process applied to a bounded function $f:[0,1]\rightarrow \R$ admits the representation
\begin{eqnarray}
&&Af(x)=-\kappa s(x) x(1-x) f^{\prime}(x)\notag\\
&& + \frac{\Xi(\nabla_\infty)}{2}\Ex{\frac{\left(-x+\sum_{i=1}^\infty Z^*_iB_i\right)\left(\sum_{i=1}^\infty Z^*_iB_i\right)}{(\sum_{i=1}^\infty {Z^*_i}^2)}f''\left(x(1-W)+VW\sum_{i=1}^\infty Z^*_iB_i\right)}\notag\\
\label{GenFVbernulli}\end{eqnarray}
 where $Z^*_i:={Z_i}/{|Z|}$ $i=1,2,\ldots$,  $Z=(Z_1,Z_2,...)$ is $\widehat\Xi$-distributed, $(B_i)_{i\in \N}$ is a sequence of \emph{i.i.d.} Bernoulli$(x)$- distributed random variables, $V$ is a uniform $[0,1]$ random variable , $W=|Z|S$, $S$ has density $2s$ on $[0,1]$, and $Z,S,V, (B_i)$ are independent.

\end{lemma}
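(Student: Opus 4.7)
The selection term $-\kappa s(x)x(1-x)f'(x)$ in \eqref{GenFVbernulli} already agrees with the first addend $\kappa\sum_i\pi_i(x^{i+1}-x)f'(x)$ of the generator in \eqref{dualc}, via \eqref{bpdrift} of Remark~\ref{r:gen}; since $\sigma=0$, only the jump part
\begin{equation*}
\bar Af(x)\,=\,\int_{\nabla_\infty}\E_B\bigl[f(x+|z|(Y^*_z-x))-f(x)\bigr]\,\frac{\Xi(dz)}{\sum_i z_i^2},\qquad Y^*_z:=\sum_i \tfrac{z_i}{|z|} B_i,
\end{equation*}
needs to be recast (here and below $(B_i)$ is an i.i.d.\ Bernoulli$(x)$ sequence). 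The heart of the proof is the ``pointwise in $z$'' identity
\begin{equation*}
\E_B\bigl[f(x+|z|(Y^*_z-x))-f(x)\bigr]\,=\,\tfrac{|z|^2}{2}\,\E_{V,S,B}\bigl[(Y^*_z-x)Y^*_z\,f''(x+|z|S(VY^*_z-x))\bigr],
\end{equation*}
with $V$ uniform on $[0,1]$, $S$ of density $2s$ on $[0,1]$, all auxiliary variables mutually independent and independent of $(B_i)$. Once this is established, dividing by $\sum_i z_i^2=|z|^2\sum_i {Z^*_i}^2$, integrating against $\Xi(dz)=\Xi(\nabla_\infty)\widehat\Xi(dz)$, and recognising $x+|Z|S(VY^*_Z-x)=x(1-W)+VWY^*_Z$ for $W=|Z|S$ delivers the expression in \eqref{GenFVbernulli} immediately.

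To prove the identity I plan to freeze the Bernoullis (so $Y^*_z=:y$ is a constant) and compute $\E_{V,S}[f''(x+|z|S(Vy-x))]$ by brute force. Since $v\mapsto x+|z|s(vy-x)$ is affine, integration over $V$ produces $[f'(x+|z|s(y-x))-f'(x-|z|sx)]/(|z|sy)$; the factor $s$ in the denominator cancels the $2s$ density of $S$, and subsequent integration over $s\in[0,1]$ upgrades the two differences of $f'$ into differences of $f$. Multiplying by $(y-x)y$ yields
\begin{equation*}
\tfrac{2}{|z|^2}\bigl[f(x+|z|(y-x))-f(x)\bigr]\,-\,\tfrac{2(y-x)}{|z|^2\,x}\bigl[f(x)-f(x-|z|x)\bigr].
\end{equation*}
Taking $\E_B$ now annihilates the second term because $\sum_i Z^*_i=1$ and $\E B_i=x$ give $\E_B[Y^*_z-x]=0$, while the first term becomes $\frac{2}{|z|^2}\E_B[f(x+|z|(Y^*_z-x))-f(x)]$. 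This is precisely the identity, up to the overall prefactor $|z|^2/2$.

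The main obstacle will be tidying up the degenerate cases in which denominators in the explicit formula vanish. For $x\in\{0,1\}$ the prefactor $(Y^*_z-x)Y^*_z$ is $\P$-a.s.\ zero and both sides of the identity are trivially $0$; on the events $\{Y^*_z=0\}$ and $\{Y^*_z=x\}$ the same prefactor cancels the apparent singularities, whose limits (by L'H\^opital or continuity) are finite multiples of $f'$ at suitable points, and the identity persists by dominated convergence using the boundedness of $f''$ on $[0,1]$ and $|Y^*_z-x|\leq 1$. The same bounds, together with the integrability of $\Xi(dz)/\sum_i z_i^2$ built into the $\Xi$-Fleming--Viot framework, justify the final Fubini interchange between $\E_{V,S,B}$ and the integration in $z$ against $\widehat\Xi$.
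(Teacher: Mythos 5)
Your proposal is correct and follows essentially the same route as the paper's proof: integrating out $V$ turns $f''$ into a difference of $f'$, integrating out $S$ (its $2s$ density cancelling the $1/s$) turns that into a difference of $f$, and the spurious boundary term is annihilated by $\E\bigl[\sum_i Z^*_iB_i - x\bigr]=0$. The only cosmetic differences are that you work pointwise in $z$ with frozen Bernoullis and discard the mean-zero term after the $S$-integration, whereas the paper works inside the full expectation and discards it immediately after the $V$-integration.
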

\begin{rem}
The identity \eqref{GenFVbernulli} extends a representation for the generator of a two-type $\Lambda$-Fleming-Viot process proved by Griffiths (equation (7) in \cite{G14}), which can be recovered as the particular case where $Z_2=Z_3=\cdots=0$ with $\widehat\Xi$-probability one. In this case \eqref{GenFVbernulli} holds with $\sum_{i=1}^\infty Z^*_iB_i=B_1$.
\end{rem}

\begin{proof}
The drift component plays no substantial role in the proof so we can as well set $s(x)\equiv 0$ for convenience and we only need to calculate directly the expectation (integrating) with respect to $V$ and $S$ and apply a simple change of variables. With $s\equiv 0,$ denote with $A^*f$ the right-hand side of \eqref{GenFVbernulli} and with $Af$ the two type $\Xi$-Fleming-Viot generator. We will first calculate the expectation with respect to $V$. Integrating by parts,
\begin{align}
\frac{A^*f(x)}{\Xi(\nabla_\infty)}=&\frac{1}{2}\Ex{\frac{1}{\sum_{i=1}^\infty {Z^*_i}^2}\Big(-x+\sum_{i=1}^\infty Z^*_iB_i\Big)\Big(\sum_{i=1}^\infty Z^*_iB_i\Big)f''\Big(x(1-W)+VW\sum_{i=1}^\infty Z^*_iB_i\big)}\nonumber\\
=&\frac{1}{2}\Ex{\frac{1}{\sum_{i=1}^\infty {Z^*_i}^2}\Big(-x+\sum_{i=1}^\infty Z^*_iB_i\Big)\frac{1}{W}f'\Big(x(1-W)+W\sum_{i=1}^\infty Z^*_iB_i\Big)}\\
&-\frac{1}{2}\Ex{\frac{1}{\sum_{i=1}^\infty {Z^*_i}^2}\Big(-x+\sum_{i=1}^\infty Z^*_iB_i\Big)\frac{1}{W}f'(x(1-W))}.\label{V2}
\end{align} 
The last term, \eqref{V2}, in fact vanishes. Indeed, for any $z^*\in\nabla_\infty:|z^*|=1,$ $$\Ex{\sum_{i=1}^\infty z^*_iB_i}=\sum_{i=1}^\infty z^*_i\E[B_i]=x.$$ Thus
\begin{align*}
&\frac{1}{2}\Ex{\frac{1}{\sum_{i=1}^\infty {Z^*_i}^2}\Big(-x+\sum_{i=1}^\infty Z^*_iB_i\Big)\frac{1}{W}f'(x(1-W))}=\\
&\frac{1}{2}\Ex{\left(-x+\Ex{\sum_{i=1}^\infty Z^*_iB_i\mid Z^*}\right)\frac{f'(x(1-W)}{W{\sum_{i=1}^\infty {Z^*_i}^2}}}=0.
\end{align*} 
Finally, we calculate the expectation with respect to $S$.
\begin{align}
\frac{A^*f(x)}{\Xi(\nabla_\infty)}=&\frac{1}{2}\Ex{\frac{1}{\sum_{i=1}^\infty {Z^*_i}^2}\Big(-x+\sum_{i=1}^\infty Z^*_iB_i\Big)\frac{1}{W}f'\Big(x(1-W)+W\sum_{i=1}^\infty Z^*_iB_i\Big)}\\
=&\frac{1}{2}\Ex{\int_0^1\frac{1}{\sum_{i=1}^\infty {Z^*_i}^2}\Big(-x+\sum_{i=1}^\infty Z^*_iB_i\Big)\frac{1}{s|Z|}f'(x(1-s|Z|)+s|Z|\sum_{i=1}^\infty Z^*_iB_i)2sds}\nonumber\\
=&\Ex{\int_0^1\frac{1}{|Z| \sum_{i=1}^\infty { Z^*_i}^2}\Big(-x+\sum_{i=1}^\infty Z^*_iB_i\Big)f'\left(x(1-s|Z|)+s|Z|\sum_{i=1}^\infty Z^*_iB_i\right)ds}\nonumber\\
=&\int_{\nabla_\infty}\frac{1}{\sum_{i=1}^\infty {z_i}^2} \left[f\Big(x(1-|z|)+\sum_{i=1}^\infty z_iB_i\Big)-f(x)\right]\widehat \Xi(dz)\nonumber\\
=&\frac{A f(x)}{\Xi(\nabla_\infty)},\notag
\end{align} 
where the second-to-last equality follows from integration by parts and the last equality follows from \eqref{dualc}.
\end{proof}

Our main result on fixation will consider the dynamics of Branching-coalescing dual processes driven by a certain class of \emph{admissible} measures $\Xi$.
\begin{defn}
\label{def:admiss}
Let $z\in\nabla_{\infty}$ and $c\in(0,1)$. Define $m(z,c)=\inf\{k\in\N:\sum_{i=1}^kz_i>|z|(1-c)\}$. We say that $z$ is admissible if $$\lim_{n\rightarrow \infty}m(z,c_n)/\sqrt{n}=0,$$ for some sequence $c_n$ such that $n c_n\rightarrow 0$. We denote $\nabla_{\infty}^\circ$ the set of elements of $\nabla_{\infty}$ which are admissible. We say that a probability measure $\mu$ on $\nabla_\infty$ is admissible if $\mu(\nabla_\infty^\circ)=1.$
\end{defn}
\begin{example}[Finite support]
Every $\Xi$ measure with support in $\nabla_m:=\{z=(z_1,z_2,\ldots)\in\nabla_\infty: z_j=0\ \forall j>m\}$, for any $m\in\N$, is admissible. The parameter measure $\Lambda$ of a $\Lambda$-Fleming-Viot model is thus always admissible since $\Lambda$ is a $\Xi$-measure concentrated on $\nabla_1=[0,1]$.
\end{example}
\begin{example}[Stick-Breaking distributions]
Let $\{Y_n\}_{n\in \N}$ be a sequence of independent and identically distributed $[0,1)$ valued random variables, such that $\P(Y_1>0)>0$. Let $\bar Z_1=Y_1$ and $\bar Z_n=Y_n\prod_{i=1}^{n-1}(1-Y_i)$, $n=2,3,\ldots$. Let the random vector $(Z_1,Z_2,...)$ be a permutation of $(\bar Z_1,\bar Z_2,...)$ such that $Z_n>Z_{n+1}$ for all $n\in \N$. If $\Xi$ is the distribution of $( Z_1, Z_2,...)$ then $\Xi$ is admissible. To see this, let $\epsilon>0$ be such that $\delta:=\P(Y_1>\epsilon)>0$ and let $C_n=\sum_{i=1}^n \mbb{I}_{\{Y_i>\epsilon\}}$. We will show that $\P (m(Z,n^{-1/2})> n^{1/4})$ is exponentially small. That $\Xi$ is admissible will then follow by Borel-Cantelli's Lemma.
\begin{eqnarray*}
\Prob{\sum_{i=1}^{n^{1/4}}Z_i>1-\frac{1}{\sqrt{n}}}&\geq&\Prob{\sum_{i=1}^{n^{1/4}}\bar Z_i>1-\frac{1}{\sqrt{n}}}\\ 
&=&\Prob{\prod_{i=1}^{n^{1/4}}(1-Y_i)<\frac{1}{\sqrt{n}}}\\ 
&\geq & \Prob{C_{n^{1/4}}>n^{1/4}\frac{\delta}{2}}\mbb I_{\{(1-\epsilon)^{n^{1/4}\frac{\delta}{2}}<\frac{1}{\sqrt{n}}\}}.
\end{eqnarray*}
Important examples in this class are the Poisson-Dirichlet distribution, its two-parameter extension (see \cite{Fe10}).
\end{example}

The main result of this section is the following.
\begin{thm}\label{THM}
Let $(X_t:t\geq 0)$ be the solution to \eqref{dualcatsde} for given $(\pi_i)\in{\cal P}(\N\cup\{\infty\})$ such that $\beta=\sum_{k=1}^\infty k\pi_k<\infty$, for $0<\kappa<\infty$  and for a measure $\Xi$ such that $\widehat\Xi$ is an \emph{admissible} probability measure on $\nabla_{\infty}$. Let $(D_t)$ be the corresponding dual branching-coalescing process. With the same notation as in Proposition \ref{lemma1} and Lemma \ref{l:xibob}, let \begin{equation}
\kappa^*:=\frac{1}{2\beta}\Ex{\frac{1}{\sum_{i=1}^\infty {Z^*_i}^2}\frac{1}{W(1-W)}}.\end{equation}
So long as $\kappa^*<\infty$, $(D_t)$  has a unique stationary distribution if and only if $\kappa< \kappa^*.$ Otherwise, if $\kappa\geq \kappa^*$ then for every $n,m>0$ it holds that $\lim_{t\rightarrow \infty}\P(D_t<m\mid D_0=n)=0$. Equivalently, $\P_x(\lim_{t\rightarrow\infty} X_t=0)=1$ if and only if  $\kappa\geq \kappa^*$.
\end{thm}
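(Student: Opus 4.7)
The plan is to exploit moment duality (Proposition \ref{pr:limdual}) to translate the fixation question for $X_t$ into a recurrence/transience dichotomy for the dual process $(D_t)$, and then carry out a Foster/Lyapunov analysis on $(D_t)$. By moment duality, $\E[X_t(1-X_t)]=\E_1[x^{D_t}]-\E_2[x^{D_t}]$: if $(D_t)$ is positive recurrent with stationary pgf $\Phi$, both terms on the right tend to $\Phi(x)$, so $X_t(1-X_t)\to 0$ in $L^1$. This concentrates $X_t$ on $\{0,1\}$ asymptotically; combined with the $\{0,1\}$-absorption built into the SDE \eqref{dualcatsde}, this yields $X_t\to X_\infty$ almost surely with $X_\infty\in\{0,1\}$ and $\P(X_\infty=1)=\lim\E[X_t]=\Phi(x)$. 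If instead $D_t\to\infty$ in probability, $\E_n[x^{D_t}]\to 0$ for every $x\in[0,1)$ and every $n$, so $\E[X_t]\to 0$ and, together with boundedness, $X_t\to 0$ a.s. The theorem therefore reduces to showing that $(D_t)$ admits a unique stationary distribution iff $\kappa<\kappa^*$.

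For this dichotomy I will apply Foster's criterion with the Lyapunov function $V(n)=\log n$. The branching part of $L$ (which carries rate $\kappa n$ per lineage, as forced by the duality identity $Ax^n=Lx^n$ and Remark \ref{r:gen}) contributes $\kappa n\sum_{i\geq 1}\pi_i[\log(n+i)-\log n]\to\kappa\beta$ as $n\to\infty$. For the coalescent part $\bar L V(n)$, each $\Xi$-event sends $D_t$ from $n$ to $n-K+D$, where $K\sim\mathrm{Bin}(n,|z|)$ is the number of participating lineages and $D$ counts the non-empty positive groups. Admissibility of $\widehat\Xi$ (Definition \ref{def:admiss}) is precisely what guarantees $D=o(n)$ in distribution, so dominated convergence gives $\bar L V(n)\to\int_{\nabla_\infty}\log(1-|z|)\,\Xi(dz)/\sum_i z_i^2$. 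The identity $\E_S[(W(1-W))^{-1}\mid Z]=-2\log(1-|Z|)/|Z|^2$ arising from the $S$-integration in the proof of Lemma \ref{l:xibob} identifies this integral with $-\beta\kappa^*$, so $LV(n)\to\beta(\kappa-\kappa^*)$. Foster's theorem then yields positive recurrence of $(D_t)$ for $\kappa<\kappa^*$ (with uniqueness of the stationary distribution following from the irreducibility of the chain), transience to $+\infty$ for $\kappa>\kappa^*$ (from the reverse drift bound together with control on the branching jump tails), and, by a next-order Lyapunov refinement such as $V(n)=\log n-c\log\log n$, the boundary case $\kappa=\kappa^*$ also on the transient side.

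The main obstacle will be the rigorous control of $\bar L V(n)$ in the presence of the singularity of $\log(1-|z|)$ at $|z|=1$: finiteness of $\kappa^*$ gives integrability of the limiting integrand, but a uniform-in-$n$ bound on $\E[\log(1-K/n+D/n)]$ is needed to apply dominated convergence, and this is where the admissibility assumption is essential. Secondary technicalities include the almost-sure convergence $X_t\to X_\infty$ (which follows from the $L^1$-convergence $\E[X_t(1-X_t)]\to 0$ combined with the absorbing-boundary structure of \eqref{dualcatsde}, via a standard hitting-time/semimartingale argument) and the delicate handling of the critical case $\kappa=\kappa^*$, which we expect to fall on the non-positive-recurrent side by the second-order Lyapunov refinement indicated above.
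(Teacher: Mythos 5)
Your route is genuinely different from the paper's. You run a Foster--Lyapunov analysis on the dual chain $(D_t)$ with $V(n)=\log n$, whereas the paper works forward in time on $(X_t)$: for $\kappa<\kappa^*$ it assumes $p\equiv 1$ and contradicts the generator identity $-w(x)=\int_0^\infty\E_x[Aw(X_s)]ds$ for the test function $w(x)=\log(1-x)+Kx$; for $\kappa\geq\kappa^*$ it uses that $p$ is harmonic for $A$ and that positive recurrence of $(D_t)$ would force $p'=-\varphi_\mu'$ to be strictly negative, contradicting the identity obtained by letting $x\to 1$ in $Ap(x)=0$ via the Griffiths-type representation of Lemma \ref{l:xibob}. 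The two computations are mirror images — your limit $LV(n)\to\beta(\kappa-\kappa^*)$ is exactly the paper's $\lim_{x\to1}Aw(x)=\kappa\beta-\kappa^*\beta$ — and your identity $\E[(W(1-W))^{-1}\mid Z]=-2\log(1-|Z|)/|Z|^2$ is correct and gives a transparent explanation of where $\kappa^*$ comes from. The subcritical direction ($\kappa<\kappa^*$ implies positive recurrence) is sound in outline, modulo the domination estimate you correctly flag; note though that the dangerous part of that estimate is the upper tail of $K\sim\mathrm{Bin}(n,|z|)$ near $|z|=1$ rather than the occupancy count $D$ (one has $\E[D]=o(n)$ for every $z\in\nabla_\infty$ without admissibility), so your attribution of the role of admissibility is not quite right.

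The genuine gap is the regime $\kappa\geq\kappa^*$. First, a positive asymptotic drift of $\log D_t$ does not by itself exclude a stationary distribution: the converse-Foster/transience criteria you are invoking require control of the increments in the downward direction, and here the downward jumps of $\log D_t$ are unbounded (a single $\Xi$-event with $|z|$ near $1$ collapses $n$ blocks to $o(n)$), so "the reverse drift bound together with control on the branching jump tails" addresses the wrong tail. Second, and more seriously, the critical case $\kappa=\kappa^*$ — which the theorem explicitly places on the non-recurrent side — cannot be decided by your first-order computation, since $LV(n)\to 0$ there. Your proposed refinement $V(n)=\log n - c\log\log n$ is a Lamperti-type second-order argument whose outcome depends on the $O(1/n)$ corrections to the drift and on second-moment information about the increments of $\log D_t$ (hence on $\sum_i i^2\pi_i$ and on quantities like $\E[(\log(1-|Z|))^2/\sum_iZ_i^2]$), none of which are assumed finite; you assert the conclusion without computing it, and it is not clear it comes out on the claimed side under the stated hypotheses. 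The paper's treatment of criticality avoids all of this with a soft sign argument: if $(D_t)$ were positive recurrent then $p'(x)=-\sum_m m\mu(m)x^{m-1}$ would be strictly negative and decreasing, and substituting this into the $x\to1$ limit of $Ap(x)=0$ (where the admissibility condition is used to kill all Bernoulli configurations with two or more zeros) yields a sum of strictly positive terms equal to zero. You would need either to import that harmonic-function argument or to supply the missing second-order estimates to close your version.
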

To prove the claim we need the next Lemma which spells out the relationship between probability of extinction in the forward in time frequency process and stationarity of the dual ancestral process.
\begin{lemma}\label{lemma:selection21}
Let $\kappa>0$, $(\pi_i)$ and $\Xi$ satisfy the assumptions of Proposition \ref{lemma1} such that there exists a solution $(X_t)$ to \eqref{dualcatsde}. Let $(D_t)$ be the dual ancestral process to $(X_t)$. One of the following two cases is always true.
\begin{itemize}
\item[(i)] If $(D_t)$ is positive recurrent then $(D_t)$ has a unique stationary distribution $\mu$ and  
$$p(x):=\P_x\Big(\lim_{t\rightarrow\infty} X_t=0\Big)=1-\varphi_\mu(x),$$
where $\varphi_\mu$ is the probability generating function of $\mu$. 
In particular $p'(x)=-\sum_{m=1}^\infty \mu(m)mx^{m-1}$ is  strictly negative and decreasing for all $x\in[0,1]$.
\item[(ii)] If $D_t$ is not positive recurrent then $\P_x(\lim_{t\rightarrow\infty} X_t=0)=1$ for every $x\in(0,1]$. 
\end{itemize}
\end{lemma}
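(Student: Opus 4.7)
The plan is to combine moment duality with the fact that $(X_t)$ is a bounded supermartingale. The drift in \eqref{dualcatsde} is $-\kappa s(X_t)X_t(1-X_t)\le 0$, while the continuous and jump martingale components have bounded quadratic variation on compacts (as already verified in condition (5.a) of the proof of Lemma \ref{l:exist}), so they are true martingales. Since $X_t\in[0,1]$, Doob's theorem yields $X_t\to X_\infty$ almost surely for some $X_\infty\in[0,1]$, and bounded convergence combined with the moment duality of Proposition \ref{pr:limdual} gives
\begin{equation*}
\E_x[X_\infty^n]=\lim_{t\to\infty}\E_x[X_t^n]=\lim_{t\to\infty}\E_n[x^{D_t}],\qquad n\in\N,\ x\in[0,1].
\end{equation*}
Everything thus reduces to the long-time asymptotics of $D_t$.

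For case (i), the chain $(D_t)$ is irreducible (from any state, selective jumps with $i\ge 2$ move up and coalescent events move down), so positive recurrence implies convergence in distribution to the unique stationary law $\mu$ from every starting state. Bounded convergence gives $\lim_{t\to\infty}\E_n[x^{D_t}]=\varphi_\mu(x)$, \emph{independently of $n$}, hence $\E_x[X_\infty^n]=\varphi_\mu(x)$ for every $n$. Taking $n=1,2$ yields $\E_x[X_\infty(1-X_\infty)]=0$; since $X_\infty\in[0,1]$ this forces $X_\infty\in\{0,1\}$ almost surely. Therefore $\P_x(X_\infty=1)=\E_x[X_\infty]=\varphi_\mu(x)$ and $p(x)=1-\varphi_\mu(x)$. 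The strict negativity of $p'$ and its monotonicity follow by termwise differentiation of the power series $\varphi_\mu$, noting that $p''(x)=-\sum_{m\ge 2}m(m-1)\mu(m)x^{m-2}\le 0$.

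For case (ii), $(D_t)$ is either transient or null recurrent; in both regimes the standard ergodic theorem for irreducible continuous-time Markov chains on $\N$ gives $\P_n(D_t=m)\to 0$ as $t\to\infty$ for every fixed $m,n$. Fixing $x\in(0,1)$ and $\varepsilon>0$, and choosing $K$ with $x^K<\varepsilon$, I would split
\begin{equation*}
\E_n[x^{D_t}]\le \P_n(D_t\le K)+x^{K}\le \P_n(D_t\le K)+\varepsilon,
\end{equation*}
so the limit is zero, $\E_x[X_\infty^n]=0$, and $X_\infty=0$ almost surely (the boundary value $x=1$ is absorbing, so trivial). The main obstacle I expect is the clean handling of the null-recurrent sub-case: one must justify that every transition probability $p_t(n,m)$ vanishes despite the absence of a stationary law, which is a standard but sometimes technical appeal to the ergodic classification of continuous-time chains on $\N$. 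Verifying irreducibility of the specific generator \eqref{L_n} is a minor prerequisite and is automatic under any nontrivial selective-branching component $\sum_{i\ge 2}\pi_i>0$. Everything else is mechanical bookkeeping once the supermartingale convergence and the moment duality are lined up.
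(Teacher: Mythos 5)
Your proof is correct and follows essentially the same route as the paper's: moment duality plus dominated convergence, with the dichotomy on positive recurrence of $(D_t)$ determining whether $\lim_{t\to\infty}\E_n[x^{D_t}]$ equals $\varphi_\mu(x)$ or $0$, and the law of the limit of $X_t$ then read off from its moments (your $n=1,2$ trick and the paper's ``determined by moments on $[0,1]$'' argument are equivalent). Your supermartingale argument for the almost-sure existence of $X_\infty$ is a welcome explicit justification of a step the paper leaves implicit when it interchanges limit and expectation.
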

\begin{proof}
First assume that $(D_t)$ is positive recurrent. The process $(D_t)$ moves from any $n\in \N$ to each of its neighbours in $\N\cup\{\infty\}$ with a positive rate, hence clearly $(D_t)$ is an irreducible Markov process and thus it has a unique stationary distribution. Now we apply moment duality and dominated convergence. Let $n\in \N\cup\{\infty\}$ and $x\in(0,1]$.
\begin{equation}\label{eq:recurrent}
\E_x\Big[\lim_{t\rightarrow \infty}X_t^n\Big]=\lim_{t\rightarrow \infty}\E_x\Big[X_t^n\Big]=\lim_{t\rightarrow \infty}\E_n\Big[x^{D_t}\Big]=\sum_{m=1}^\infty \mu(m)x^m=\varphi_\mu(x).
\end{equation}
Since the random variable $\lim_{t\rightarrow \infty}X_t$ takes values in $[0,1]$, its distribution is characterised by its moments. This allows us to conclude that 
$$
\Prob{\lim_{t\rightarrow \infty}X_t\in\cdot |X_0=x}=\left(1-\varphi_\mu(x)\right)\delta_0(\cdot)+\varphi_\mu(x)\delta_1(\cdot).
$$
Now we assume that $(D_t)$ is not positive recurrent. This implies that for every $n,m\in \N\cup\{\infty\},$ $\lim_{t\rightarrow \infty} \P(D_t <m\mid D_0=n)=0$. We will use again moment duality and dominated convergence. For any $x\in(0,1],$
\begin{equation}\label{eq:nonrecurrent}
\E_x\Big[\lim_{t\rightarrow \infty}X_t^n\Big]=\lim_{t\rightarrow \infty}\E_x\Big[X_t^n\Big]=\lim_{t\rightarrow \infty}\E_n\Big[x^{D_t}\Big]\leq x^m.
\end{equation}
As $m$ is arbitrary, we conclude that $\E_x\Big[\lim_{t\rightarrow \infty}X_t^n\Big]=0$ for every $n\in \N$. This implies that
$$
\Prob{\lim_{t\rightarrow \infty}X_t\in\cdot |X_0=x}=\delta_0(\cdot).
$$
\end{proof}
We are now ready to prove Theorem \ref{THM}.
\begin{proof}
We will first assume that $\Xi$ is concentrated on the $m$-dimensional simplex i.e. that $\widehat \Xi$ selects almost surely at most $m$ non-zero atoms $Z_1,\ldots,Z_m$. \\

\textit{Sufficiency.} Take $\kappa=\kappa^*.$ Let $p(x)=\P(\lim_{t\rightarrow \infty}X_t=0 |X_0=x)$ where $X_t$ has generator $A$ given by \eqref{dualc}.

Note that $Ap(x)=0$. Then, by Lemma \ref{l:xibob}, 
\begin{eqnarray*}\label{dualcat1}
0&=&\kappa\sum_{k=1}^\infty {\pi_k}(x^{k+1}-x)p'(x)\\&&+\frac{1}{2}\Ex{\frac{\left(-x+\sum_{i=1}^mZ^*_iB_i\right)\left(\sum_{i=1}^m Z^*_iB_i\right)}{\sum_{i=1}^m {Z^*_i}^2}p''\left(x(1-W)+VW\sum_{i=1}^m {Z^*_i}B_i\right)}.
\end{eqnarray*}
Taking the expectation with respect to $V$, dividing by $\frac{1}{2}x(1-x)$ and observing that $\frac{x^{i+1}-x}{x(1-x)}=-\sum_{j=0}^{i-1}x^j$ we obtain
\begin{eqnarray}\label{pprime}
0&=&-2\kappa\sum_{k=1}^\infty {\pi_k}\sum_{j=0}^{k-1}x^jp'(x)\notag\\
&&+\frac{1}{x(1-x)}\Ex{\frac{\left(-x+\sum_{i=1}^m Z^*_iB_i\right)}{W\sum_{i=1}^m {Z^*_i}^2}\left\{p'\left(x(1-W)+W\sum_{i=1}^m Z^*_iB_i\right)-p'(x(1-W))\right\}}.\nonumber
\end{eqnarray}
Writing the expectation over $B=(B_1,B_2...)$ as a sum  we obtain
\begin{eqnarray*}
0&=&-2\kappa\sum_{k=1}^\infty {\pi_k}\sum_{j=0}^{k-1}x^jp'(x)\label{dualcat3}\\
&&+\sum_{b\in\{0,1\}^m}\prod_{i=1}^m \Prob{B_i=b_i}\Ex{\frac{\left(-x+\sum_{i=1}^m Z^*_ib_i\right)}{{x(1-x)}W\sum_{i=1}^m {Z^*_i}^2}\Delta(x,b;W,Z^*)},\nonumber
\end{eqnarray*}
where
$$\Delta(x,b;W,Z^*):=\left\{p'\left(x(1-W)+W\sum_{i=1}^mZ^*_ib_i\right)-p'(x(1-W))\right\}.$$
Since $\P(B=b)=x^{|b|}(1-x)^{m-|b|}$ where $|b|=\sum_{i=1}^m b_i$ then, if we consider the limit when $x$ goes to $1$, only the terms where at most one of the $b_i$ is equal to zero remain, as only in this cases $\lim_{x\rightarrow 1}\frac{\P(b=v)}{x(1-x)}(-x+\sum_{i=1}^mz_ib_i)\neq 0$. Thus as $x\to 1,$
\begin{eqnarray}\label{finitem}
0&=&-2\kappa\sum_{k=1}^\infty {\pi_k} \ kp'(1^-)\nonumber\\
&&+\Ex{\frac{p'(1^-)-p'(1-W)}{W\sum_{i=1}^m {Z^*_i}^2}}\nonumber\\
&&
-\sum_{i=1}^m\Ex{\frac{Z^*_i}{W\sum_{i=1}^m {Z^*_i}^2}\left\{p'(1-WZ^*_i)-p'(1-W)\right\}}\
\end{eqnarray}
The first expectation accounts for the case where $b_i=1$ for all $i=1,\ldots,m$. The second expectation deals with all the cases where the only zero coordinate in $b=(b_1,\ldots,b_m)$ is $b_i$ for each $i=1,\ldots m$, in each of which cases one has 
$$-x+\sum_{i=1}^m Z^*_i b_i=-x+1-Z^*_i\to-Z_i^*,\ \ \ x\to 1.$$
Rewriting,
\begin{eqnarray}
0&=&-2\kappa\beta p'(1^-)\nonumber
\\&&+\sum_{i=1}^m\Ex{\frac{Z^*_i}{W\sum_{i=1}^m {Z^*_i}^2}\left\{p'(1^-)-p'(1-Z^*_iW)\right\}}.\nonumber
\end{eqnarray}
{Multiplying and dividing by $(1-W)$ the argument of the expectation, our choice of $\kappa$ implies that
\begin{eqnarray}\label{dualcat4}
    0=\sum_{i=1}^m\Ex{\frac{-Wp'(1)-(1-W)p'(1-Z^*_iW)}{W(1-W)}}.
\end{eqnarray}}
Equation \eqref{dualcat4} cannot be true if $(D_t)$ is positive recurrent, because in that case $p'(x)$ is negative. By Lemma \ref{lemma:selection21}, then, $p(x)=1$ for all $x\in[0,1)$ and thus $p'(x)=0$ for all $x\in[0,1]$. \\

\textit{Necessity.} Let $\kappa<\kappa^*$. Assume that $p(x)=1$ for all $x\in [0,1)$. We will show that this assumption leads to a contradiction. Consider the test function $w(x)=\log(1-x)+Kx$, where $K$ is a positive (large) constant. For any $t\geq 0$ consider the generator equation
\begin{equation}\label{eq:selection81}
\E_x[w(X_t)]-w(x)=\int_0^t\E_x[Aw(X_s)]ds,
\end{equation}
where $A$ is the generator in Equation \eqref{dualc}. If $p(x)=1$, when $t\rightarrow \infty$, equation \eqref{eq:selection81} becomes
\begin{equation}\label{eq:selection91}
-w(x)=\int_0^\infty \E_x[Aw(X_s)]ds,
\end{equation}
we will show that for some choice of $x\in (0,1]$ and $K>0$ equation \eqref{eq:selection91} does not hold. This implies that $p(x)\neq 1$ for some $x\in(0,1]$, a contradiction. Lemma \ref{lemma:selection21} then assures that $p(x)\neq 1$ for all $x\in(0,1]$.

Our proof is based on the analysis of $\E[Aw(X_s)]$. We will first take the expectation with respect to the uniform random variable $V$. Using $w'(x)=-\frac{1}{1-x}+K$ and the fact that
$$\int_0^1 w^{''}(a+bv)\ dv=\frac{1}{b}[w'(a+b)-w'(a)]$$
we obtain
\begin{eqnarray}\label{eq:selection111}
&&Aw(x)=x\kappa\sum_{i=1}^\infty \pi_i\sum_{j=0}^{i-1}x^j-xK\kappa\sum_{i=1}^\infty \pi_i(1-x^{i})\nonumber\\&&
-\frac{1}{2}\Ex{\frac{\left(-x+\sum_{i=1}^m{Z^*_i}B_i\right)\left[(1-x(1-W)-W\sum_{i=1}^m{Z^*_i}B_i)^{-1}-(1-x(1-W))^{-1}\right]}{W\sum_{i=1}^m {Z^*_i}^2}}\nonumber\\
&=&x\kappa\sum_{i=1}^\infty \pi_i\sum_{j=0}^{i-1}x^j-xK\kappa\sum_{i=1}^\infty \pi_i(1-x^{i})\nonumber\\&&
-\frac{1}{2}\Ex{\frac{1}{\sum_{i=1}^m {Z^*_i}^2}(-x+\sum_{i=1}^m{Z^*_i}B_i)\frac{\sum_{i=1}^m{Z^*_i}B_i}{(1-x(1-W)-W\sum_{i=1}^m{Z^*_i}B_i)(1-x(1-W))}}.\nonumber\\
\end{eqnarray}
Note that for all $x\in (0,1)$ if $K\rightarrow \infty$ the right hand side of Equation \eqref{eq:selection111} tends to $-\infty$. Then we can chose $K$ large enough such that, for $x\in[ 0,1)$,
%$$
%\sup_{x\in [0,1]}\frac{1}{x}Aw(x)=\frac{1}{x}Aw(1).
%$$
 $$x^{-1}A w(x)<\lim_{x\to 1} x^{-1} Aw(x).$$ We shall prove that  
 $$
\lim_{x\to 1}  Aw(x)<0
$$
 so that  $Aw(x)<0$ for every $x\in[0,1)$. \\
 Rewriting the expectation over $(B_1,\ldots,B_m)$ as a sum and reordering the terms, \eqref{eq:selection111} becomes
\begin{eqnarray}\label{eq:selection11}
&&Aw(x)=x\kappa\sum_{i=1}^\infty \pi_i\sum_{j=0}^{i-1}x^j-xK\kappa\sum_{i=1}^\infty \pi_i(1-x^{i})\notag\\&&
-\frac{1}{2}\sum_{b\in\{0,1\}^m}x^{|b|}(1-x)^{m-|b|}\notag\\
&&\hspace{2cm}\times\Ex{\frac{1}{\sum_{i=1}^m {Z^*_i}^2}\left(-x+\sum_{i=1}^mZ^*_ib_i\right)\frac{\sum_{i=1}^mZ^*_ib_i}{\left(1-x(1-W)-W\sum_{i=1}^mZ^*_ib_i\right)(1-x(1-W))}}.\nonumber\\
\end{eqnarray}
But with $K$ chosen large enough, as $\lim_{x\rightarrow 1}\Prob{\bigcap_{i=1}^m\{B_i=1\}}=1$,
\begin{eqnarray}\label{eq:selection12}
\lim_{x\rightarrow 1} Aw(x)&=&\kappa\sum_{k=1}^\infty k\pi_k\nonumber\\&&-\lim_{x\rightarrow 1}\frac{1}{2}\Ex{\frac{1}{\sum_{i=1}^m {Z^*_i}^2}(-x+1)\frac{1}{(1-x(1-W)-W)(1-x(1-W))}}\nonumber\\
&=&\kappa\beta-\frac{1}{2}\Ex{\frac{1}{\sum_{i=1}^m {Z^*_i}^2}\frac{1}{W(1-W)}}\\&=&
\kappa\beta-\kappa^*\beta<0.\nonumber
\end{eqnarray}
This implies that, for every $x\in [0,1],$ $Aw(x)<0$. Finally we observe that  $\lim_{x\rightarrow 1}w(x)=-\infty$, so for every big enough $x\in [0,1)$, it holds that $-w(x)>0$. However, we showed that $\int_0^\infty \E_x[Aw(X_s)]ds<0$ for every $x\in(0,1)$. This contradicts Equation \eqref{eq:selection91} and completes the proof for $\widehat\Xi$ a probability measure selecting at most $m$ non-zero atoms with probability one.

Our new task is to extend the proof to the case which $\widehat\Xi$ is a general admissible probability measure, not necessarily concentrated on the $m$-simplex. \\

\textit{Sufficiency.} The proof remains unchanged until Equation \eqref{dualcat3}. There we will fix $n\in\N$ and write the expectation over $(B_i)$ by summing over the probability of every posible configuration of the first $m_n$ terms.  
\begin{eqnarray}
0&=&-2\beta\sum_{i=1}^\infty \pi_i\sum_{j=0}^{i-1}x^jp'(x)\\
&&+\sum_{b\in\{0,1\}^{m_n}}\Prob{\bigcap_{i=1}^{m_n}\{B_i=b_i\}}
\E\Big[\frac{1}{x(1-x)}\frac{\left(-x+\sum_{i=1}^{m_n}{Z^*_i}b_i+\sum_{i=m_n+1}^{\infty}{Z^*_i}B_i\right)}{\sum_{i=1}^\infty {Z^*_i}^2}\nonumber\\&&
\times\frac{p'\left(x(1-W)+W(\sum_{i=1}^{m_n}{Z^*_i}b_i+\sum_{i=m_n+1}^{\infty}{Z^*_i}B_i)\right)-p'(x(1-W))}{W}\Big].\nonumber
\end{eqnarray}

Rather than taking the limit when $x$ goes to $1$, we evaluate in $x=1-1/n$. Note that $\sum_{i=1}^{m_n}{Z^*_i}+\sum_{i=m_n+1}^{\infty}{Z^*_i}B_i=1-Ec_n$, where $c_n$ is the sequence making admissible the probability $\widehat\Xi$ (see Definition \ref{def:admiss}) and E is a random variable such that $|E|\leq 2$. \\
Now, contrary to the $m$-finite case, we will need to control all the terms in the expectation for every possible realisation $b=b_1,\ldots,b_{m_n}$ and not only for those binary vectors $b$ with at most one zero coordinate.\\
Let us first see what happens in the case $b=(1,1,...,1)$. 
\begin{eqnarray}
&&\E\Big[\frac{(1-1/n)^{m_n-1}}{1/n}\frac{(1/n-E c_n)}{\sum_{i=1}^\infty {Z^*_i}^2}\nonumber\\ &&\times\frac{p'((1-1/n)(1-W)+W(1-E c_n)-p'((1-1/n)(1-W))}{W}\Big].\nonumber
%\\&&\geq \E[\frac{(1-1/n)^{m_n-1}(1/n-b_n)}{1/n}\frac{Y^2}{\sum_{i=1}^\infty z_i^2}\frac{p'((1-1/n)(1-W)+W)-p'((1-1/n)(1-W))}{W}]\nonumber
\end{eqnarray}
Since, by definition \ref{def:admiss}, $nc_n\to 0,$ if we let $n$ go to infinity this term converges to
\begin{eqnarray}\label{all1}
\Ex{\frac{1}{\sum_{i=1}^\infty {Z^*_i}^2}\frac{p'((1-W)+W)-p'(1-W)}{W}}.
\end{eqnarray}
Now we will focus in all the vectors $b\in\{0,1\}^{m_n}$ that have exactly one zero. 
\begin{eqnarray}
\sum_{j=1}^{m_n}\E\Big[(1-1/n)^{m_n-2}\frac{1}{\sum_{i=1}^\infty {Z^*_i}^2}(1/n-{Z^*_i}-Ec_n)\nonumber
\\ \times\frac{p'((1-1/n)(1-W)+W(1-{Z^*_j}-Ec_n)-p'((1-1/n)(1-W))}{W}\Big].\nonumber
\end{eqnarray}
When $n\to\infty$ the equation reduces to
\begin{eqnarray}\label{one0}
\sum_{j=1}^{m_n}\Ex{\frac{-Z^*_j}{\sum_{i=1}^\infty {Z^*_i}^2}\frac{p'((1-W{Z^*_j}))-p'(1-W)}{W}}.
\end{eqnarray}
Finally, let $R_k:=\{b\in\{0,1\}^{m_n}:|b|=m_n-k\}$, for every $k\in\{0,1,...,m_n\}$. Note that $|R_k|\leq m_n^k=o(n^{k/2})$ and for every $k\geq 2$
\begin{eqnarray}\label{neg}
\sum_{b\in R_k}\E\Big[(1-1/n)^{m_n-k-1}(1/n)^{k-1}\frac{\left(-1+1/n+\sum_{i=1}^{m_n}Z^*_ib_i+\sum_{i=m_n+1}^\infty Z^*_iB_i\right)}{\sum_{i=1}^\infty {Z^*_i}^2}\nonumber
\\\times\frac{p'\left((1-1/n)(1-W)+W(\sum_{i=1}^{m_n}Z^*_ib_i+\sum_{i=m_n+1}^\infty Z^*_iB_i)\right)-p'((1-1/n)(1-W))}{W}\Big]\nonumber\\
=O\big(m_n^k(1/n)^{k-1}\big)=o\big(n^{k/2-1}\big).\nonumber\\
\end{eqnarray}
In the derivation of the equality, we used the property $-p'(1)<\infty$. This is a fact which requires a proof. Assume $p'(1)=-\sum_{k=1}^\infty k\mu(k)=-\infty$. Consider the limit when $x\rightarrow 1$ in \eqref{pprime}. Using that $-x+\sum_{i=1}^m{Z^*_i}b_i<1-x$ and that $p'(x)$ is negative and decreasing, we obtain
\begin{eqnarray}\label{infinitzero}
0&>&\frac{1}{2}\Ex{\frac{-p'(1)-(1-W)p'(x(1-W)+W\sum_{i=1}^m{Z^*_i}b_i)-(1-W)p'(x(1-W))}{W\sum_{i=1}^\infty {Z^*_i}^2}}\nonumber\\
&=&\infty,\nonumber
\end{eqnarray}
which is a contradiction. Then $-p'(1)<\infty$ and Equation \eqref{neg} holds.

Equations \eqref{all1}, \eqref{one0} and \eqref{neg} imply that as $n\to\infty$ we obtain Equation \eqref{finitem}, and the proof follows as in the finite $m$ case.\\
\textit{Necessity.} The strategy is the same as in the \textit{sufficiency} case. Observe that 
\begin{eqnarray*}
&&\left(1-x(1-W)-W\left(\sum_{i=1}^{m_n}{Z^*_i}B_i+\sum_{i=m_n+1}^{\infty}z_iB_i\right)\right)\notag\\
&&\ \ \ \ \ =(1-W)\left(-x+\sum_{i=1}^{m_n}{Z^*_i}B_i+\sum_{i=m_n+1}^\infty Z^*_iB_i\right)+1-\left(\sum_{i=1}^{m_n}{Z^*_i}B_i+\sum_{i=m_n+1}^\infty Z^*_iB_i\right)\notag\\
&&\ \ \ \ \ =(1-W)\left(-x+\sum_{i=1}^{m_n}Z^*_iB_i+\sum_{i=m_n+1}^{\infty}Z^*_iB_i\right)+O(c_n).
\end{eqnarray*}
Applying this to Equation \eqref{eq:selection11}, and evaluating at $x=1-1/n$, we obtain
\begin{eqnarray}
Av(x)&=&\kappa\sum_{k=1}^\infty k\pi_k\notag\\
&& -\frac{1}{2}\Prob{\bigcap_{i=1}^{m_n}\{B_i=1\}}
\Ex{\frac{1}{\sum_{i=1}^\infty {Z^*_i}^2}\frac{1}{(1-W)(1-(1-1/n)(1-W))}}+O(c_n)\nonumber\\&&
=\kappa\beta-\frac{1}{2}\Ex{\frac{1}{W(1-W)\sum_{i=1}^\infty {Z^*_i}^2}}+O(c_n),\nonumber
\end{eqnarray}
which converges to \eqref{eq:selection12} when $n\to\infty$. The rest of the proof is as in the finite $m$ case.
\end{proof}

\section*{Acknowledgements} We thank Bob Griffiths and Cl\'ement Foucart for many useful conversations. The second author is also grateful to Jochen Blath for his friendly hospitality at the TU Berlin, Institut f\"ur Mathematik, where part of this research has been carried out.

\bibliographystyle{abbrv}

\end{document}